\providecommand{\U}[1]{\protect\rule{.1in}{.1in}}
\newtheorem{theorem}{Theorem}
\theoremstyle{plain}
\newtheorem{corollary}{Corollary}
\newtheorem{definition}{Definition}
\newtheorem{lemma}{Lemma}
\newtheorem{proposition}{Proposition}
\newtheorem{remark}{Remark}
\DeclareMathOperator{\Div}{div}
\numberwithin{equation}{section}
\numberwithin{theorem}{section}
\numberwithin{proposition}{section}
\numberwithin{remark}{section}
\numberwithin{definition}{section}
\numberwithin{lemma}{section}
\numberwithin{corollary}{section}
\numberwithin{example}{section}
\numberwithin{claim}{section}
\begin{document}
\title[Nonlinear Parabolic Equation with Hysteresis]{Well-posedness and Long-time Behaviour for a Nonlinear Parabolic Equation with Hysteresis}
\author{Achille Landri Pokam Kakeu}
\address{A.L. Pokam Kakeu, Department of Mathematics and Computer Science, University
of Dschang, P.O. Box 67, Dschang, Cameroon}
\email{pokakeu@yahoo.fr}
\author{Jean Louis Woukeng}
\address{J.L. Woukeng, Department of Mathematics and Computer Science, University of
Dschang, P.O. Box 67, Dschang, Cameroon}
\email{jwoukeng@yahoo.fr}
\date{August, 2019}
\subjclass[2000]{47J10, 74N30}
\keywords{Nonlinear parabolic equation; hysteresis; time discretization method,
long-time behaviour.}

\begin{abstract}
The work deals with a study of a nonlinear parabolic equation with hysteresis,
containing a nonlinear monotone operator in the diffusion term. The
well-posedness of the model equation is addressed by using an implicit time
discretization scheme in conjunction with the piecewise monotonicity of the
hysteresis operator, and a fundamental inequality due to M. Hilpert. A
characterization of the $\omega$-limit set of the solution is then given
through the study of the long-time behaviour of the solution of the equation
in which we investigate the convergence of trajectories to limit points.

\end{abstract}
\maketitle

\section{Statement of the problem\label{subsec1.1}}

We consider a nonlinear parabolic problem with hysteresis functionals whose
the diffusion term is a monotone operator arising from a convex functional.
The main purpose is to study the well-posedness and the long-time behaviour.
The model problem is stated as follows %

\begin{equation}
\left\{
\begin{array}
[c]{l}%
\dfrac{\partial}{\partial t}\left(  cu+w\right)  -{\Div}\boldsymbol{a}%
(\cdot,\nabla u)=f\text{ in }Q=\Omega\times(0,T)\\
\\
w(x,t)=\mathcal{W}[u(x,\cdot);x](t)\text{ in }Q\\
\\
u=0\text{ on }\partial\Omega\times(0,T)\text{ and }u(x,0)=u_{0}(x)\text{ in
}\Omega
\end{array}
\right.  \ \ \ \ \label{eq1}%
\end{equation}
where $u$ is the unknown function, $T>0$ is the final time, and $\Omega$ is a
sufficiently smooth open bounded set in $\mathbb{R}^{d}$ locally located on
one side of its boundary. The known data in (\ref{eq1}) are the functions $c$,
$f$, $\mathcal{W}$, $\boldsymbol{a}$ and $u_{0}$, which are constrained as follows.

\begin{itemize}
\item[(\textbf{A1})] The function $\boldsymbol{a}=(a_{i})_{1\leq i\leq d}$ is
defined by $a_{i}(x,\lambda)=\frac{\partial J}{\partial\lambda_{i}}%
(x,\lambda)$ where the function $J:\Omega\times\mathbb{R}^{d}\rightarrow
\lbrack0,+\infty)$ satisfies the following conditions:

\item[(i)] $J(\cdot,\lambda)$ is measurable and differentiable for all $\lambda\in\mathbb{R}^{d}$,

\item[(ii)] $J(x,\cdot)$ is strictly convex for almost all $x\in\Omega$,

\item[(iii)] There exist three constants $p\geq2$, $\alpha_{1}>0$ and
$\alpha_{2}>0$ such that
\begin{equation}
\alpha_{1}\left\vert \lambda\right\vert ^{p}\leq J(x,\lambda)\leq\alpha
_{2}(1+\left\vert \lambda\right\vert ^{p})\;\;\;\;\;\;\;\;\;\;\;\;\;
\label{0.2}%
\end{equation}
for all $\lambda\in\mathbb{R}^{d}$ and for almost all $x\in\Omega$.

\item[(\textbf{A2})] $c\in L^{\infty}(\Omega)$ with $c\geq\alpha>0$ where
$\alpha$ is a constant independent of $x\in\Omega$.

\item[(\textbf{A3})] $f\in L^{2}(Q)$ and $u_{0}\in W_{0}^{1,p}(\Omega)$.

\item[(\textbf{A4})] For every $x\in\Omega$, the hysteresis operator
$\mathcal{W}[\cdot;x]$ is continuous on $\mathcal{C}\left(  [0,T]\right)  $
and piecewise increasing. Moreover $\mathcal{W}$ is affine bounded and there
exist a function $\kappa_{0}\in L^{2}(\Omega)$ and a positive constant
$\gamma_{0}$ such that, for all $\ell\in\mathbb{N}$, the parameterized final
value mapping
\[
(s,x)\mapsto\mathcal{W}_{f}(s;x),\ s=(v_{0},...,v_{\ell})\in S
\]
is measurable and satisfies
\begin{equation}
\left\vert \mathcal{W}_{f}(s;x)\right\vert \leq\kappa_{0}(x)+\gamma
_{0}\left\Vert s\right\Vert _{\infty}, \label{eq1.11}%
\end{equation}
where $\mathcal{W}_{f}$ denotes the generating functional of the hysteresis
operator $\mathcal{W}$ and $S$ is the set of all finite strings of real
numbers, a string being as usual a vector having either finitely or countably
infinitely many real components.
\end{itemize}

\noindent Further information concerning the construction of $\mathcal{W}_{f}$
can be found in  \cite{Brokate}.

Besides piecewise monotonicity and continuity, we need a further assumption on
the hysteresis operator $\mathcal{W}[\cdot;x]$, which will ensure uniqueness
of the solution:

\begin{itemize}
\item[(\textbf{A5})] The operator $\mathcal{W}[\cdot;x]$ maps $W^{1,1}(0,T)$
for every $x\in\Omega$ into itself, and there exist $\gamma_{1}>0$ and
$\kappa_{1}\in L^{2}(\Omega)$ such that the condition
\[
\left\vert (\mathcal{W}[v;x])^{\prime}(t)\right\vert \leq\kappa_{1}%
(x)+\gamma_{1}\left\vert v^{\prime}(t)\right\vert \ \ \forall x\in
\Omega,\ \text{for a.e. }t\in(0,T)
\]
is satisfied for every $v\in W^{1,1}(0,T)$.
\end{itemize}

Of special interest as far as applications are concerned in this work is an
existence and uniqueness result and an asymptotic behaviour in time for an
evolution parabolic equation modeling a diffusion process with hysteresis;
these questions are addressed in this paper. The problem (\ref{eq1}) is an
equation that can be regarded as a model of heat conduction including phase
transition.

Hysteresis is defined as rate independent memory effect. The basic feature of
hysteresis behaviour is a memory effect and the irreversibility of the
process. Hysteresis was mentioned for the first time in an article
\cite{Ewing} on magnetism published in 1885. It is a nonlinear phenomenon that
occurs in many natural and constructed systems, and because of the strong
nonlinearity of this phenomenon which is usually non-smooth, it has not been
easy to treat it mathematically for a very long time. Hence it was only in the
early seventies that a group of Russian scientists led by M. A. Krasnoselskii 
initiated a systematic mathematical investigation of the phenomenon of hysteresis which resulted in
the fundamental monograph of Krasnoselskii and Pokrovskii \cite{Pokrovskii}.
During that time, many mathematicians have contributed to the mathematical
theory, and the important monographs of Mayergoyz \cite{Mayergoyz} and
Visintin \cite{A.Visintin} have emerged. It is very important to note that
Visintin intensively investigated PDEs with hysteresis. 

Hysteresis operators can be seen as nonlinear
causal functional operators. One of the main characteristics of these
operators is the fact that they have memory character, \textit{i.e.} the value
at some time $t$ do not only depends on the value of $t$ at this precise
moment, but it also depends on the previous evolutions and on inputs up to the
time $t$. For further results
and references concerning hysteresis operators, see e.g. \cite{Eleuteri,
Eleuteri2, Flynn, Francu, Visintin} and the references therein.

The work is organized as follows. In Section \ref{subsec1.3}, we gather
necessary elementary tools together with some functional spaces, and we state
our main result. Suitable properties of the function $\boldsymbol{a}%
(x,\lambda)$ are detailed in Section \ref{subsec1.2}.  Section \ref{sec3} is
dedicated to the proof of the existence result. The technique we use for
proving the existence result is based on approximation by implicit time
discretization, a priori estimates and passage to the limit by compactness.
This approximation procedure is often used and is quite convenient in the
analysis of equations that include a memory operator, as in any time-step we
solve a stationary problem in which this operator is reduced to a
superposition with a nonlinear function. The details of the passage to the
limit in the nonlinear diffusion term is worked out carefully. To obtain
uniqueness result, a fundamental inequality due to Hilpert \cite{Hilpert} is
employed as well as the $L^{1}$-stability of the solution of the equation. The
latter is done in Section \ref{subsec3.2}. Finally, we prove in the last
section a result related to the long-time behaviour of the solution to
(\ref{eq1}).

\section{Functional setting and statement of the main results
\label{subsec1.3}}

\subsection{Functional setting}

In order to facilitate the reading of this paper, we collect here some
mathematical tools, starting with some well-known inequalities which will be
used in the work.

The set of non-negative integers is denoted by $\mathbb{N}$ and
 $\mathbb{N}^{\ast}\equiv \mathbb{N} - \left\{0\right\}$
while $\mathbb{R}$ stands for the set of real numbers. As usual we denote the
non-negative real numbers by $\mathbb{R}_{+}$. For $1\leq d\in\mathbb{N}$,
$\mathbb{R}^{d}$ stands for the numerical space of variables $x=(x_{1}%
,...,x_{d})$. For any Banach space $X$, we shall denote by $X^{\prime}$ its
topological dual.

We will need some fundamental inequalities which are given below:

\begin{lemma}
[Young inequality]Suppose that $1<p,p^{\prime}<\infty$ and $\frac{1}{p}%
+\frac{1}{p^{\prime}}=1$ then
\[
\left\vert ab\right\vert \leq\frac{1}{p}\delta\left\vert a\right\vert
^{p}+\frac{1}{p^{\prime}}\delta^{-\frac{1}{p-1}}\left\vert b\right\vert
^{p^{\prime}}\;\;\forall a,b\in\mathbb{R},\;\delta>0.
\]

\end{lemma}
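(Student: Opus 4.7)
The plan is to derive the weighted form from the classical (unweighted) Young inequality by a simple rescaling. First I would recall the baseline fact: for non-negative reals $x,y\geq 0$ and conjugate exponents $p,p'$ with $\frac{1}{p}+\frac{1}{p'}=1$, one has $xy \leq \frac{x^{p}}{p}+\frac{y^{p'}}{p'}$. The cleanest route to this uses convexity of the exponential: for $x,y>0$, write $xy=\exp\bigl(\frac{1}{p}\log x^{p}+\frac{1}{p'}\log y^{p'}\bigr)$ and apply Jensen's inequality to the convex function $\exp$ with weights $\frac{1}{p},\frac{1}{p'}$; the degenerate cases $x=0$ or $y=0$ are trivial, and replacing $a,b$ by $|a|,|b|$ handles arbitrary real inputs.

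Next I would insert the parameter $\delta>0$ by rescaling. Applying the unweighted inequality with $x:=\delta^{1/p}|a|$ and $y:=\delta^{-1/p}|b|$ yields $xy=|ab|$, $x^{p}=\delta|a|^{p}$, and $y^{p'}=\delta^{-p'/p}|b|^{p'}$. Using the identity $\frac{p'}{p}=\frac{1}{p-1}$ (which follows directly from the conjugacy relation $p'=\frac{p}{p-1}$), the coefficient of $|b|^{p'}$ becomes exactly $\frac{1}{p'}\delta^{-1/(p-1)}$, matching the statement.

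The main, and indeed only, subtlety is the bookkeeping on the $\delta$-exponent, so I would isolate the identity $p'/p=1/(p-1)$ before concluding. There is no serious obstacle here; the whole argument reduces to a single substitution once the unweighted Young inequality is in hand.
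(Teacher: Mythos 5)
Your proof is correct: the rescaling $x=\delta^{1/p}|a|$, $y=\delta^{-1/p}|b|$ applied to the unweighted Young inequality, together with the identity $p'/p=1/(p-1)$, gives exactly the stated bound. The paper states this lemma as a standard fact without proof, so there is nothing to compare against; your argument is the canonical one.
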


\begin{lemma}
[Poincar\'{e} inequality]Let $\Omega$ be an open bounded set in $\mathbb{R}%
^{d}$ and $p\geq1$ a real number. Then there is a constant $C=C(\Omega,p)>0$
such that
\[
\left\Vert u\right\Vert _{L^{p}(\Omega)}\leq C\left\Vert \nabla u\right\Vert
_{L^{p}(\Omega)}\text{ for all }u\in W_{0}^{1,p}(\Omega).
\]

\end{lemma}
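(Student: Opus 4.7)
The plan is to prove the Poincaré inequality by the classical one-dimensional slicing argument combined with a density reduction. Since $\Omega \subset \mathbb{R}^d$ is bounded, after a translation and possible relabelling of coordinates I may assume $\Omega \subset \{ x : -R < x_1 < R \}$ for some $R > 0$. By the very definition of $W_0^{1,p}(\Omega)$, the space $C_c^\infty(\Omega)$ is dense in it, so it is enough to establish the bound for $u \in C_c^\infty(\Omega)$ and then pass to the limit in $L^p(\Omega)$ using the continuity of both sides of the inequality with respect to the $W^{1,p}$ norm.

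For such a test function $u$, extended by zero to $\mathbb{R}^d$, the fundamental theorem of calculus in the $x_1$ direction yields
\[
u(x_1, x') = \int_{-R}^{x_1} \frac{\partial u}{\partial x_1}(t, x') \, dt, \qquad (x_1, x') \in (-R, R) \times \mathbb{R}^{d-1}.
\]
H\"older's inequality (trivial for $p=1$, and with conjugate exponent $p/(p-1)$ for $p > 1$) then gives the pointwise estimate
\[
|u(x_1, x')|^p \leq (2R)^{p-1} \int_{-R}^{R} \left| \frac{\partial u}{\partial x_1}(t, x') \right|^p dt.
\]
Integrating this inequality successively in $x_1 \in (-R, R)$ and in $x' \in \mathbb{R}^{d-1}$, and using $|\partial u / \partial x_1| \leq |\nabla u|$, I arrive at $\|u\|_{L^p(\Omega)}^p \leq (2R)^p \|\nabla u\|_{L^p(\Omega)}^p$, so the constant $C = 2R$ depends only on $\Omega$ (through its diameter) and on $p$.

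No substantial obstacle arises in this scheme; the only point worth attention is that the argument genuinely uses $u \in W_0^{1,p}(\Omega)$ rather than $u \in W^{1,p}(\Omega)$, since the vanishing trace on $\partial\Omega$ (encoded by the density of $C_c^\infty(\Omega)$) is what anchors the representation formula at the slab boundary $x_1 = -R$. No regularity assumption on $\partial\Omega$ is required, only its boundedness, which is consistent with the hypotheses made on $\Omega$ in the introduction of the paper.
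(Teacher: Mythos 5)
Your proof is correct: the slicing argument via the fundamental theorem of calculus in the $x_1$ direction, the H\"older estimate on each line, and the reduction to $C_c^\infty(\Omega)$ by density all work exactly as you describe, and yield the constant $C=2R$ depending only on the width of a slab containing $\Omega$. The paper states this lemma as a standard preliminary without giving any proof, so there is nothing to compare against; your argument is the standard textbook one and fills that gap correctly, including the pertinent remark that the vanishing of $u$ at the slab boundary (i.e.\ membership in $W_0^{1,p}(\Omega)$ rather than $W^{1,p}(\Omega)$) is what anchors the representation formula.
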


For $m\in\mathbb{N}$ and $1\leq p\leq\infty$, the Sobolev space $W^{m,p}%
(\Omega)$ is defined as the space of all functions $u\in L^{p}(\Omega)$ having
generalized partial derivatives $D^{\alpha}u\in L^{p}(\Omega)$ for every
multi-index $\alpha$ satisfying $0\leq\left\vert \alpha\right\vert \leq m$.
Endowed with the norm
\[
\left\Vert u\right\Vert _{W^{m,p}(\Omega)}=\left(  \sum_{0\leq\left\vert
\alpha\right\vert \leq m}\int_{\Omega}{\left\vert D^{\alpha}u(x)\right\vert
^{p}dx}\right)  ^{\frac{1}{p}},
\]
$W^{m,p}(\Omega)$ becomes a Banach space which is separable for $1\leq
p<\infty$. In the case $p=2$, one obtains a Hilbert space denoted by
$H^{m}(\Omega)$ and with the inner product
\[
\left\langle u,v\right\rangle =\sum_{0\leq\left\vert \alpha\right\vert \leq
m}\int_{\Omega}{D^{\alpha}u(x)D^{\alpha}v(x)dx}.
\]
Also, denoting by $\mathcal{C}_{0}^{\infty}(\Omega)$ the space of infinitely
differentiable functions on $\Omega$ with compact supports, we define the
space $W_{0}^{m,p}(\Omega)$ as the closure of $\mathcal{C}_{0}^{\infty}%
(\Omega)$ in $W^{m,p}(\Omega)$, and we denote by $W^{-m,p^{\prime}}(\Omega)$
the topological dual of $W_{0}^{m,p}(\Omega)$ (integers $m\geq1$ and real
number $p>1$). For $p=2$, instead of $H^{1}(\Omega)$, we may in general define
the fractional Sobolev spaces $H^{\sigma}(\Omega)$ with $\sigma\in\mathbb{R}$
as in \cite{Adams, Brezis}.

Next, we need to introduce another class of function spaces which will be
employed for the variational treatment of our evolution problem, namely spaces
of the type $W^{m,p}(0,T;V)$, where $T>0$ is some final time and $V$ is a
certain function space. If $V=\mathbb{R}^{d}$, no additional difficulties
occur, since all relevant properties of $W^{m,p}(0,T)$ carry over to the
finite product $\prod_{i=1}^{d}{W^{m,p}(0,T)}$. In the infinite-dimensional
case, the definition of $W^{m,p}(0,T;V)$ uses the notion of Bochner integrals
which attain values in $V$. Let us give a very brief introduction to this
notion. For our purposes, we may restrict ourselves to the case where $V$ is a
reflexive Banach space, since we will exclusively deal with the space
$V=W_{0}^{1,p}(\Omega)$.

\begin{definition}
\emph{A function }$u:\left[  0,T\right]  \rightarrow V$\emph{ is called
Bochner measurable, if it is the pointwise limit of a sequence }$(u_{n}%
)$\emph{ of simple functions. The function }$u$\emph{ is called Bochner
integrable if }%
\[
\lim_{n\rightarrow\infty}{\int_{0}^{T}{\left\Vert u(t)-u_{n}(t)\right\Vert
_{V}}dt}=0,
\]
\emph{in which case the integral of }$u$\emph{ is defined by }%
\[
\int_{0}^{T}{u(t)dt}=\lim_{n\rightarrow\infty}\int_{0}^{T}{u_{n}(t)dt}.
\]
\emph{We denote by }$L^{p}(0,T;V)$\emph{ (}$1\leq p<\infty$\emph{) the space
of all Bochner measurable functions }$u:\left[  0,T\right]  \rightarrow
V$\emph{ for which }%
\[
\left\Vert u\right\Vert _{L^{p}(0,T;V)}=\left(  \int_{0}^{T}{\left\Vert
u(t)\right\Vert _{V}^{p}dt}\right)  ^{\frac{1}{p}}<\infty.
\]

\end{definition}

\medskip Equipped with $\left\Vert \cdot\right\Vert _{L^{p}(0,T;V)}$,
$L^{p}(0,T;V)$ is a Banach space. Similarly, we define the space $L^{\infty
}(0,T;V)$, using the norm
\[
\left\Vert u\right\Vert _{L^{\infty}(0,T;V)}=\text{ess}\sup_{t\in\left[
0,T\right]  }\left\Vert u(t)\right\Vert _{V}.
\]
For $1<p<\infty$, the space $L^{p}(0,T;V)$ is separable. In addition, its
topological dual is isomorphic to $L^{p^{\prime}}(0,T;V^{\prime})$, where
$\frac{1}{p}+\frac{1}{p^{\prime}}=1$. After the definition of the spaces
$L^{p}(0,T;V)$, the spaces $W^{m,p}(0,T;V)$ are introduced using the concept
of distributions with values in Banach spaces. For the details of this
construction, we refer the reader to \cite{Adams}.

We end this subsection with an important result related to the existence
result for monotone operators. Let $X$ be a real reflexive Banach space, let
$A:X\rightarrow X^{\prime}$ ($X^{\prime}$ the topological dual of $X$) and let
$\left\langle ,\right\rangle $ denote the duality pairing between $X$ and
$X^{\prime}$. We recall the following definitions. The operator

\begin{itemize}
\item $A$ is monotone if $\left\langle Au_{1}-Au_{2},u_{1}-u_{2}\right\rangle
\geq0$ for all $u_{1},u_{2}\in X$;

\item $A$ is strictly monotone if $\left\langle Au_{1}-Au_{2},u_{1}%
-u_{2}\right\rangle >0$ whenever $u_{1}\neq u_{2}$;

\item $A$ is hemicontinuous if $\lim_{t\rightarrow0}A(u+tv)=Au$ in $X^{\prime
}$-weak$\ast$ for all $u,v\in X$.

\item $A$ is coercive if
\[
\lim_{\left\Vert u\right\Vert \rightarrow\infty}\frac{\left\langle
Au,u\right\rangle }{\left\Vert u\right\Vert }=\infty.
\]

\end{itemize}

Let us consider the operator equation of the form
\begin{equation}
\text{Find }u\in X\text{ such that }Au=b\text{.} \label{eq1000}%
\end{equation}
The existence issue for (\ref{eq1000}) is given by the next result.

\begin{theorem}
[Browder-Minty]\label{t2.4}Suppose that $A$ is strictly monotone,
hemicontinuous and coercive. Then \emph{(\ref{eq1000})} has a unique solution
$u\in X$ for every $b\in X^{\prime}$.
\end{theorem}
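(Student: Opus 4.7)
Uniqueness follows immediately from strict monotonicity: if $Au_{1}=Au_{2}=b$, then $\langle Au_{1}-Au_{2},u_{1}-u_{2}\rangle=0$ forces $u_{1}=u_{2}$. For existence my plan is a Galerkin approximation combined with Minty's monotonicity trick.

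\emph{Finite-dimensional reduction.} After replacing $X$ by the closed linear span of a countable dense subset if necessary, pick an increasing chain of finite-dimensional subspaces $X_{1}\subset X_{2}\subset\cdots$ with $\overline{\bigcup_{n}X_{n}}=X$. Writing $j_{n}:X_{n}\hookrightarrow X$ for the inclusion, I seek $u_{n}\in X_{n}$ with
\[
\langle Au_{n},v\rangle=\langle b,v\rangle\qquad\forall\,v\in X_{n}.
\]
Identifying $X_{n}$ with its own dual via any inner product, this is the equation $F_{n}(u_{n})=0$, where $F_{n}:X_{n}\to X_{n}$ represents $u\mapsto j_{n}^{\ast}(Au-b)$; hemicontinuity makes $F_{n}$ continuous since $X_{n}$ is finite-dimensional. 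Coercivity gives $\langle F_{n}(u),u\rangle>0$ on the sphere $\|u\|=R$ for some $R$ depending only on $\|b\|_{X'}$, so a standard consequence of Brouwer's fixed-point theorem produces $u_{n}\in X_{n}$ with $F_{n}(u_{n})=0$ and $\|u_{n}\|\leq R$ uniformly in $n$.

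\emph{Weak limits.} By reflexivity, after extraction $u_{n}\rightharpoonup u$ in $X$. The sequence $(Au_{n})$ is bounded in $X'$ (a standard consequence of monotonicity and hemicontinuity of an everywhere-defined operator, which imply local boundedness), so $Au_{n}\rightharpoonup\chi$ along a further subsequence. For any $v\in X_{m}$ and all $n\geq m$ the Galerkin identity gives $\langle Au_{n},v\rangle=\langle b,v\rangle$; passing to the limit and using density of $\bigcup_{m}X_{m}$ yields $\chi=b$. Testing the Galerkin identity against $u_{n}$ also gives $\langle Au_{n},u_{n}\rangle=\langle b,u_{n}\rangle\to\langle b,u\rangle$.

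\emph{Minty's trick.} For every $v\in X$, monotonicity gives $\langle Av-Au_{n},v-u_{n}\rangle\geq 0$. Expanding the pairing and passing to the limit using the three convergences above (the ``mixed'' term $\langle Au_{n},u_{n}\rangle$ is precisely the one handled by the Galerkin identity) yields
\[
\langle Av-b,v-u\rangle\geq 0\qquad\forall\,v\in X.
\]
Substituting $v=u+tw$ with $t>0$ and $w\in X$, dividing by $t$, and letting $t\to 0^{+}$ via hemicontinuity gives $\langle Au-b,w\rangle\geq 0$; as $w$ is arbitrary, $Au=b$. The technical heart of the argument is this last step: $A$ need not be weakly continuous, and it is precisely the combination of monotonicity with hemicontinuity, through Minty's lemma, that rescues the identification of the weak limit. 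Everything else---uniqueness, finite-dimensional solvability, and the a priori bound---is comparatively routine once the Galerkin framework and coercivity are invoked correctly.
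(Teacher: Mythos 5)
The paper does not prove Theorem \ref{t2.4} at all: it is quoted as a classical result, with the proof delegated to \cite{Lions} and \cite{Lazlo}. Your Galerkin--Minty argument is precisely the proof given in those references, so at the level of strategy there is nothing to compare: uniqueness from strict monotonicity, finite-dimensional solvability via a consequence of Brouwer's theorem, the a priori bound from coercivity, weak compactness from reflexivity, and identification of the limit by Minty's trick and hemicontinuity is the correct and standard architecture.

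Two of your justifications are nonetheless not right as stated. First, hemicontinuity alone does not make $F_{n}$ continuous on $X_{n}$: in the paper's definition hemicontinuity is only continuity along line segments, which does not imply joint continuity even on $\mathbb{R}^{2}$. The correct lemma is that a monotone, hemicontinuous, everywhere-defined operator is demicontinuous (this uses the local boundedness of monotone operators together with Minty's lemma), and demicontinuity coincides with continuity in finite dimensions; so monotonicity is essential at this step, not just hemicontinuity. Second, local boundedness of $A$ does not yield boundedness of $(Au_{n})_{n}$ in $X'$: it provides a bound only on a neighbourhood of each point, and without compactness of the ball $\{\|u\|\leq R\}$ one cannot pass to a uniform bound on bounded sets (monotone hemicontinuous operators need not map bounded sets to bounded sets). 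The standard repair again uses monotonicity together with the Galerkin identity: for each fixed $v$ one has $\langle Au_{n},v\rangle\leq\langle Au_{n},u_{n}\rangle+\langle Av,v-u_{n}\rangle=\langle b,u_{n}\rangle+\langle Av,v-u_{n}\rangle$, which is bounded uniformly in $n$ since $\|u_{n}\|\leq R$; applying this also to $-v$ and invoking the uniform boundedness principle gives $\sup_{n}\|Au_{n}\|_{X'}<\infty$. Finally, your opening reduction tacitly assumes $X$ separable (a non-separable space has no countable dense subset); for general reflexive $X$ one runs the scheme over the net of all finite-dimensional subspaces, although separability suffices for the application in this paper, where $X=W_{0}^{1,p}(\Omega)$. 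With these three points repaired the proof is complete.
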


The proof of the above theorem can be found in \cite{Lions} or alternatively
in \cite{Lazlo} where several applications have been given for various
properties of monotone operators.

\subsection{Statement of the main results\label{subsec1.4}}

We first define the notion of weak solution we will deal with in this work.

\begin{definition}
\label{d1.1}\emph{Let the assumptions (\textbf{A1})-(\textbf{A4}) hold. We say
that a function }$u:Q\rightarrow\mathbb{R}$\emph{ is a weak solution of
(\ref{eq1}) if }%
\[
\left\{
\begin{array}
[c]{l}%
u\in L^{\infty}(0,T;W_{0}^{1,p}(\Omega))\text{\emph{ with }}u^{\prime}\in
L^{2}(0,T;L^{2}(\Omega)),\\
w=\mathcal{W}(u;\cdot)\in L^{2}(Q)\cap L^{2}(\Omega;\mathcal{C}%
([0,T]))\text{\emph{ with }}w^{\prime}\in L^{p^{\prime}}(0,T;W^{-1,p^{\prime}%
}(\Omega))
\end{array}
\right.
\]
\emph{and }$u$\emph{ satisfies equation (\ref{eq14}) }%
\begin{equation}
\left\{
\begin{array}
[c]{l}%
{%
{\displaystyle\int_{Q}}
{cu^{\prime}(x,t)\varphi(x,t)dx}dt}+%
{\displaystyle\int_{0}^{T}}
\left\langle {w^{\prime}(\cdot,t),\varphi(\cdot,t)}\right\rangle {dt}\\
\ \ \ +{%
{\displaystyle\int_{Q}}
{\boldsymbol{a}(x,\nabla u(x,t))\cdot\nabla\varphi(x,t)dx}dt}={%
{\displaystyle\int_{Q}}
{f(x,t)\varphi(x,t)dx}dt}\\
\text{\emph{for all }}\varphi\in L^{p}(0,T;W_{0}^{1,p}(\Omega)).
\end{array}
\right.  \label{eq14}%
\end{equation}

\end{definition}

The first main purpose of the work is to prove the following result.

\begin{theorem}
\label{t4.1}Let the assumptions \emph{(\textbf{A1})-(\textbf{A4})} hold. Then
there exists at least a solution $u$ in the sense of Definition
\emph{\ref{d1.1}}. Moreover if assumption \emph{(\textbf{A5})} is satisfied,
then $u$ is unique and the following estimate holds:
\begin{equation}
\alpha\int_{t_{1}}^{t_{2}}\int_{\Omega}\left\vert u^{\prime}(x,t)\right\vert
^{2}dxdt+2\sigma(u(t_{2}))-2\sigma(u(t_{1}))\leq\frac{1}{\alpha}\int_{t_{1}%
}^{t_{2}}\int_{\Omega}\left\vert f(x,t)\right\vert ^{2}dxdt \label{1.9}%
\end{equation}
for all $0\leq t_{1}\leq t_{2}\leq T$. Here $\alpha>0$ is the same as in
assumption \emph{(\textbf{A2})} and $\sigma(\cdot)$ is defined by
\emph{(\ref{1.3})}.
\end{theorem}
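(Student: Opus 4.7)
The plan is to prove Theorem~\ref{t4.1} in three stages consistent with the strategy announced in the introduction: (i) existence by implicit Euler time-discretization and passage to the limit; (ii) uniqueness via Hilpert's inequality; and (iii) the energy inequality (\ref{1.9}) by a discrete chain-rule argument.

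For existence, I fix $n \in \mathbb{N}^{\ast}$, set $h = T/n$, $t_k = kh$, $f^k = h^{-1}\int_{t_{k-1}}^{t_k} f(\cdot, s)\,ds$, $u^0 = u_0$, and construct $u^k \in W_0^{1,p}(\Omega)$ inductively as the solution of the stationary problem
\[
\int_\Omega c \frac{u^k - u^{k-1}}{h}\varphi\,dx + \int_\Omega \frac{w^k - w^{k-1}}{h}\varphi\,dx + \int_\Omega \boldsymbol{a}(x, \nabla u^k)\cdot \nabla \varphi\,dx = \int_\Omega f^k \varphi\,dx,
\]
for all $\varphi \in W_0^{1,p}(\Omega)$, where $w^k(x) = \mathcal{W}_f((u^0(x), \ldots, u^k(x)); x)$. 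By (A4), piecewise monotonicity of $\mathcal{W}[\cdot; x]$ makes $u^k \mapsto (w^k - w^{k-1})/h$ monotone in $u^k(x)$, so the left-hand side defines a strictly monotone, hemicontinuous, and coercive operator on $W_0^{1,p}(\Omega)$ by the strict convexity and the $p$-growth of $J$ in (A1), and Theorem~\ref{t2.4} yields $u^k$ uniquely. Testing with $u^k - u^{k-1}$ and summing in $k$ (see the energy computation below) delivers a uniform bound on $\sum_k h\|(u^k - u^{k-1})/h\|_{L^2(\Omega)}^2$ and $\max_k \sigma(u^k)$, hence by (A1)(iii) on $\|u^k\|_{W_0^{1,p}}$. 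The piecewise-constant and piecewise-linear interpolants $\bar u_n, \hat u_n$ then converge, up to a subsequence, weakly-$\ast$ in $L^\infty(0,T; W_0^{1,p}(\Omega))$ and, by Aubin--Lions, strongly in $L^2(Q)$ to some $u$; the continuity assertion in (A4) then identifies the limit $w = \mathcal{W}(u;\cdot)$.

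The main obstacle is the passage to the limit in the nonlinear diffusion term $\int_Q \boldsymbol{a}(x, \nabla \bar u_n)\cdot \nabla \varphi\,dxdt$, for which weak convergence of $\nabla \bar u_n$ is not enough. I plan to use Minty's trick: since $\boldsymbol{a}(x, \cdot)$ is monotone by convexity of $J$,
\[
\int_Q \bigl(\boldsymbol{a}(x, \nabla \bar u_n) - \boldsymbol{a}(x, \nabla v)\bigr)\cdot \nabla(\bar u_n - v)\,dxdt \geq 0
\]
for every $v \in L^p(0,T; W_0^{1,p}(\Omega))$. The equation for $\bar u_n$ identifies $\limsup_n \int_Q \boldsymbol{a}(\cdot,\nabla \bar u_n)\cdot \nabla \bar u_n$ with the corresponding expression in the limit, and substituting $v = u + \lambda\varphi$ and letting $\lambda \to 0^{\pm}$ yields $\boldsymbol{a}(\cdot,\nabla\bar u_n) \rightharpoonup \boldsymbol{a}(\cdot,\nabla u)$ weakly in $L^{p'}(Q)^d$, completing the passage to the limit in (\ref{eq14}).

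For uniqueness under (A5), let $u_1, u_2$ be two solutions with the same data, set $\tilde u = u_1 - u_2$ and $\tilde w = \mathcal{W}(u_1;\cdot) - \mathcal{W}(u_2;\cdot)$; assumption (A5) gives $\tilde w' \in L^2(Q)$. Subtracting the equations and testing with a Lipschitz approximation $\mathrm{sign}_\varepsilon(\tilde u) \in W_0^{1,p}(\Omega)$, monotonicity of $\boldsymbol{a}(x,\cdot)$ makes the diffusion term non-negative via $\mathrm{sign}_\varepsilon' \geq 0$, while Hilpert's inequality gives $\tilde w'\,\mathrm{sign}(\tilde u) \geq \partial_t |\tilde w|$ in the limit $\varepsilon \to 0$, leading to $\frac{d}{dt}\int_\Omega (c|\tilde u| + |\tilde w|)\,dx \leq 0$ and hence $\tilde u \equiv 0$. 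Finally, for (\ref{1.9}), I would re-test the discrete scheme with $u^k - u^{k-1}$: (A2) yields $\alpha h\|(u^k-u^{k-1})/h\|_{L^2}^2$ in the $c$-term, piecewise monotonicity of $\mathcal{W}$ gives $\int_\Omega (w^k-w^{k-1})(u^k-u^{k-1})\,dx \geq 0$, and convexity of $J$ gives the discrete chain-rule
\[
\int_\Omega \boldsymbol{a}(x, \nabla u^k)\cdot \nabla(u^k - u^{k-1})\,dx \geq \sigma(u^k) - \sigma(u^{k-1}).
\]
Young's inequality $|\int f^k(u^k - u^{k-1})| \leq \frac{h}{2\alpha}\|f^k\|_{L^2}^2 + \frac{\alpha h}{2}\|(u^k-u^{k-1})/h\|_{L^2}^2$ absorbs half of the $c$-term; summing over the indices bracketing $t_1,t_2$ and passing to the limit $n \to \infty$ (using lower semicontinuity of $\sigma$ and weak convergence of $\partial_t \hat u_n$ in $L^2$) delivers (\ref{1.9}).
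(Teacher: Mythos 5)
Your existence and uniqueness arguments follow essentially the same route as the paper: implicit Euler discretization, the a priori bound from testing with $u^{k}-u^{k-1}$, Minty's trick for the diffusion term, and Hilpert's inequality with the regularized sign function for uniqueness. Two points deserve attention, one of which is a genuine gap.

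First, a sketch-level omission: strong convergence of the interpolants in $L^{2}(Q)$ via Aubin--Lions is not by itself enough to identify $w=\mathcal{W}(u;\cdot)$, because (\textbf{A4}) only gives continuity of $\mathcal{W}[\cdot;x]$ on $\mathcal{C}([0,T])$; one needs convergence of $u_{\ell}(x,\cdot)$ in $\mathcal{C}([0,T])$ for a.e.\ $x$. The paper obtains this from the compactness of the embedding $Y\hookrightarrow L^{2}(\Omega;\mathcal{C}([0,T]))$ through the interpolation chain (\ref{eq9'}); you should make this step explicit rather than attributing the identification directly to (\textbf{A4}).

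Second, and more seriously, your derivation of (\ref{1.9}) by summing the discrete energy identity between the indices bracketing $t_{1},t_{2}$ and passing to the limit does not close. The discrete inequality has the form $\alpha\sum h\|\cdot\|^{2}+2\sigma(u^{k_{2}})-2\sigma(u^{k_{1}})\leq\frac{1}{\alpha}\sum h\|f^{k}\|^{2}$. Weak lower semicontinuity handles the time-derivative term and the term $2\sigma(u^{k_{2}})$ (both appear with a favourable sign), but the term $-2\sigma(u^{k_{1}})$ requires $\limsup_{n}\sigma(\bar u_{n}(t_{1}))\leq\sigma(u(t_{1}))$, i.e.\ \emph{upper} semicontinuity of the convex functional $\sigma$ along the approximating sequence. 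The a priori estimates only give weak convergence of $\nabla\bar u_{n}(t_{1})$ in $L^{p}(\Omega)$, under which $\sigma$ is lower, not upper, semicontinuous; the argument therefore only works for $t_{1}=0$, where $u^{0}=u_{0}$ is exact. The paper circumvents this entirely by proving (\ref{1.9}) at the continuous level: under (\textbf{A5}) one gets $w^{\prime}\in L^{2}(Q)$, hence from the equation $\operatorname{div}\boldsymbol{a}(\cdot,\nabla u)\in L^{2}(Q)$, so that Lemma \ref{l1.1} (in the form of Remark \ref{r3.1}) gives the exact chain rule $\frac{d}{dt}\sigma(u(t))=-\left(\operatorname{div}\boldsymbol{a}(\cdot,\nabla u(t)),u^{\prime}(t)\right)$ with $t\mapsto\sigma(u(t))$ absolutely continuous; testing the equation with $u^{\prime}$ on $[t_{1},t_{2}]$ then yields (\ref{1.9}) for all $0\leq t_{1}\leq t_{2}\leq T$. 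Note also that this is where assumption (\textbf{A5}) enters the estimate, consistent with the statement of the theorem, whereas your discrete route does not use it.
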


It is an urgent matter to make precise the comparison of our first main result
in Theorem \ref{t4.1} with the existing ones in the literature. This kind of
problem has already been considered in several work; see, e.g., \cite{Brokate,
Colli, Eleuteri, KV1994, Stefanelli, A.Visintin}, just to cite a few. Most of
these work deal with linear diffusion operators while very few treat nonlinear
cases. Although assumptions (\textbf{A1})-(\textbf{A4}) are the natural way to
generalize the linear operators (like the Laplacian) or the nonlinear ones
(like the $p$-Laplacian), to the best of our knowledge, there is no work in
the literature dealing with nonlinear PDEs with hysteresis and exhibiting such
kind of nonlinearity in the diffusion term. One of the work with assumptions
close to ours is \cite{KV1994} in which the authors considered a nonlinear
diffusion operator of the form
\[
-{\sum_{i=1}^{N}}\frac{\partial}{\partial x_{i}}a_{i}\left(  \frac{\partial
u}{\partial x_{i}}\right)  \text{ for }u\in W^{1,p}(\Omega)
\]
where the $a_{i}$'s are linear or nonlinear monotone functions defined on
$\mathbb{R}$; see \cite[p. 42]{KV1994}. We believe that one of the main
difficulties in obtaining the solutions of (\ref{eq1}) is about obtaining an
energy inequality like (\ref{eq1.7}). But this is a mere consequence of Lemma
\ref{l1.1} (see also Remark \ref{r3.1}) that stems from some properties of the
functional $J$. Hence the inequality (\ref{eq1.7}) is in order, thanks to the
monotonicity property of the hysteresis operator.

Theorem \ref{t4.1} will be proved in Section \ref{sec3}. To do this, we
proceed in several steps. First of all we have to approximate our model
problem by employing an implicit time discretization scheme of (\ref{eq14}),
which leads to the semilinear variational equation
\[
b(u_{m},\varphi)+\int_{\Omega}{b_{m}\left(  x,u_{m}(x)\right)  \varphi
(x)dx}=0\text{ for }\varphi\in W_{0}^{1,p}(\Omega),
\]
where $u_{m}=u(x,mh)$ and the functionals $b$ and $b_{m}$ are defined below in
Section \ref{sec3}. We then derive the following uniform estimates
\begin{equation}
\sum_{m=1}^{\ell}{h\left\Vert \frac{u_{m}-u_{m-1}}{h}\right\Vert
_{L^{2}(\Omega)}^{2}}+\sup_{1\leq k\leq\ell}\left\Vert \nabla u_{k}\right\Vert
_{L^{p}(\Omega)}^{p}\leq C. \label{eq03}%
\end{equation}

Next, defining the linear interpolates $u_{\ell}$ and $\tilde{u}_{\ell}$ (see 
Section \ref{sec3}), we prove that the estimate
\[
{\int_{Q}}\left\vert {{u_{\ell}^{\prime}}}\right\vert {{^{2}dx}dt}+\sup_{0\leq
t\leq T}\left(  \left\Vert \nabla\tilde{u}_{\ell}(t)\right\Vert _{L^{p}%
(\Omega)}^{p}+\left\Vert \nabla u_{\ell}(t)\right\Vert _{L^{p}(\Omega)}%
^{p}\right)  \leq C,
\]
holds uniformly in $\ell\in\mathbb{N}$, and further
\begin{equation}
\left\Vert u_{\ell}-\tilde{u}_{\ell}\right\Vert _{L^{2}(Q)}\leq\frac{C}{\ell}.
\label{eq01}%
\end{equation}
The linear interpolate $u_{\ell}$ is the approximate solution of the
discretized problem
\[
c\frac{\partial u_{\ell}}{\partial t}+\frac{\partial w_{\ell}}{\partial
t}-\Div\boldsymbol{a}(\cdot,\nabla\tilde{u}_{\ell})=\tilde{f}_{\ell}\text{ in
}W^{-1,p^{\prime}}(\Omega)\text{ a.e. in }(0,T).
\]
It is important to note that the estimate (\ref{eq01}) above allows us to
prove that the sequences $u_{\ell}$ and $\tilde{u}_{\ell}$ have the same
strong limit in $L^{2}(Q)$. This estimate replaces its counterpart in the
linear setting where the following one
\begin{equation}
\left\Vert u_{\ell}-\tilde{u}_{\ell}\right\Vert _{L^{2}(0,T;H_{0}^{1}%
(\Omega))}\leq\frac{C}{\ell} \label{eq02}%
\end{equation}
is used, enabling to conclude that the sequence $u_{\ell}-\tilde{u}_{\ell}$
strongly converges to $0$ in $L^{2}(0,T;H_{0}^{1}(\Omega))$. Estimate
(\ref{eq02}) stems from the equality
\[
\left\Vert u_{\ell}-\tilde{u}_{\ell}\right\Vert _{L^{2}(0,T;H_{0}^{1}%
(\Omega))}^{2}=\frac{T}{3\ell}\sum_{m=1}^{\ell}{\left\Vert \nabla u_{m}-\nabla
u_{m-1}\right\Vert _{L^{2}(\Omega)}^{2}}.
\]
However, in the nonlinear framework, the above equality is out of reach, and
we therefore replace it by the following one
\[
\left\Vert u_{\ell}-\tilde{u}_{\ell}\right\Vert _{L^{2}(Q)}^{2}=\frac{T}%
{3\ell}\sum_{m=1}^{\ell}{\left\Vert u_{m}-u_{m-1}\right\Vert _{L^{2}(\Omega
)}^{2},}%
\]
which, thanks to (\ref{eq03}), ensures the equality of the weak limits of both
sequences $(u_{\ell})_{\ell}$ and $(\widetilde{u}_{\ell})_{\ell}$ in
$L^{p}(0,T;W_{0}^{1,p}(\Omega))$.

In order to state the next main result, we need a further notion. We define
the $\omega$-limit set of a solution $u$ of (\ref{eq1}) by
\[
\omega(u)=\{\varphi\in W_{0}^{1,p}(\Omega):\exists t_{n}\rightarrow
+\infty\text{ such that }\lim_{n\rightarrow\infty}\left\Vert u(\cdot
,t_{n})-\varphi\right\Vert _{L^{p}(\Omega)}=0\}.
\]
It is known (see e.g. \cite[p. 1019]{Chill}) that if $u:\mathbb{R}%
_{+}\rightarrow L^{2}(\Omega)$ is a solution of (\ref{eq1}) such that the
range $\{u(\cdot,t):t\geq1\}$ is relatively compact in $L^{2}(\Omega)$, then
the $\omega$-limit set $\omega(u)$ is nonempty.

The next result is related to the existence of $\omega$-limit sets of
trajectories of (\ref{eq1}). Here, we deal with global solutions $u\in
L^{2}(\mathbb{R}_{+};L^{2}(\Omega))\cap L^{\infty}(\mathbb{R}_{+};W_{0}%
^{1,p}(\Omega))$ of (\ref{eq1}) given by Theorem \ref{t4.1} in which
assumption (\textbf{A3}) is replaced by (\textbf{A3})$_{1}$ below: 

\begin{itemize}
\item[(\textbf{A3})$_{1}$] $f\in H^{1}(\mathbb{R}_{+};L^{2}(\Omega))$ and
$u_{0}\in W_{0}^{1,p}(\Omega)$ where $\mathbb{R}_{+}=[0,\infty)$.
\end{itemize}

It is important to note that, in view of the estimate (\ref{eq1.7}) where the
constant $C$ is independent of $T$, such solutions exist by Theorem
\ref{t4.1}. At this level, we are not requiring uniqueness, but only the
existence of solutions to (\ref{eq1}).

\begin{theorem}
\label{t1.2}Assume that \emph{(\textbf{A1})}, \emph{(\textbf{A2})},
\emph{(\textbf{A3})}$_{1}$ and \emph{(\textbf{A4})} hold. Then for any $u$
given by Theorem \emph{\ref{t4.1}} and any sequence of times $(t_{n})_{n}$
such that $t_{n}\rightarrow\infty$ with $n$, there exist a subsequence of
$(t_{n})_{n}$ still denoted by $(t_{n})_{n}$ and a function $u_{\infty}\in
W_{0}^{1,p}(\Omega)$ such that
\begin{equation}
u(\cdot,t_{n})\rightarrow u_{\infty}\text{ in }L^{p}(\Omega)\text{-strong,}
\label{1.4}%
\end{equation}
where $u_{\infty}$ solves the stationary problem
\begin{equation}
-\Div\boldsymbol{a}(\cdot,\nabla u_{\infty})=g\text{ in }\Omega, \label{1.5}%
\end{equation}
the function $g$ being equal either to $0$ (if $f$ depends on the time
variable $t$) or to $f$ (if $f$ does not depend on $t$).
\end{theorem}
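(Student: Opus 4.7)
The plan is to combine the energy inequality (\ref{1.9}) with Rellich compactness to establish the strong $L^p$-limit (\ref{1.4}), and then to pass to the limit in time-shifted copies of (\ref{eq1}) to identify $u_\infty$ as a solution of the stationary problem (\ref{1.5}).

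First I would apply (\ref{1.9}) on $[0,t_2]$ and send $t_2\to\infty$. Since $f\in L^2(\mathbb{R}_+;L^2(\Omega))$ by (\textbf{A3})$_1$ and since the lower estimate in (\ref{0.2}) forces $\sigma(v)\geq\alpha_1\|\nabla v\|_{L^p(\Omega)}^p$, this yields the global bounds $u'\in L^2(\mathbb{R}_+;L^2(\Omega))$ and $u\in L^\infty(\mathbb{R}_+;W_0^{1,p}(\Omega))$. The trajectory $\{u(\cdot,t)\}_{t\geq 0}$ is therefore bounded in $W_0^{1,p}(\Omega)$ and, by Rellich--Kondrachov, relatively compact in $L^p(\Omega)$. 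Any sequence $t_n\to\infty$ then admits a subsequence along which $u(\cdot,t_n)\to u_\infty$ strongly in $L^p(\Omega)$ and weakly in $W_0^{1,p}(\Omega)$, which proves (\ref{1.4}).

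To obtain (\ref{1.5}), introduce the time-shifted functions $u^{(n)}(x,t):=u(x,t+t_n)$, $w^{(n)}(x,t):=w(x,t+t_n)$ and $f^{(n)}(x,t):=f(x,t+t_n)$ on $Q_1:=\Omega\times(0,1)$. The bound $\int_{t_n}^{t_n+1}\|u'(\cdot,s)\|_{L^2}^2\,ds\to 0$ gives $(u^{(n)})'\to 0$ strongly in $L^2(Q_1)$; combined with $u^{(n)}(\cdot,0)\to u_\infty$ in $L^p(\Omega)$, the fundamental theorem of calculus forces $u^{(n)}\to u_\infty$ (viewed as time-independent) strongly in $L^2(Q_1)$ and, along a further subsequence, $u^{(n)}\rightharpoonup u_\infty$ weakly in $L^p(0,1;W_0^{1,p}(\Omega))$. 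Now test the shifted version of (\ref{eq14}) against $\varphi(x,t)=\psi(x)\chi(t)$ with $\psi\in W_0^{1,p}(\Omega)$ and $\chi\in\mathcal{C}_0^\infty(0,1)$, and integrate by parts in the hysteresis term to obtain
\begin{multline*}
\int_0^1\!\!\int_\Omega c(u^{(n)})'\psi\chi\,dx\,dt-\int_0^1\langle w^{(n)},\psi\rangle\chi'(t)\,dt\\
+\int_0^1\!\!\int_\Omega \boldsymbol{a}(x,\nabla u^{(n)})\cdot\nabla\psi\,\chi\,dx\,dt=\int_0^1\!\!\int_\Omega f^{(n)}\psi\chi\,dx\,dt.
\end{multline*}
The first term vanishes from the $L^2$-convergence of $(u^{(n)})'$. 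The source term limits to $\bigl(\int_0^1\chi\bigr)\int_\Omega g\psi\,dx$, with $g=0$ in the time-dependent case (because then $f^{(n)}\to 0$ in $L^2(Q_1)$) and $g=f$ otherwise. For the diffusion term I would apply a Minty--Browder argument: $\boldsymbol{a}(\cdot,\nabla u^{(n)})$ is bounded in $L^{p'}(Q_1)$ by (\textbf{A1}) and admits a weak limit which, upon testing against $u^{(n)}-u_\infty$ and invoking the strict monotonicity of $\boldsymbol{a}$ coming from (\textbf{A1})(ii), is identified with $\boldsymbol{a}(\cdot,\nabla u_\infty)$ in the usual way. Varying $\psi$ and $\chi$ then produces (\ref{1.5}).

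The main obstacle I anticipate is the treatment of the hysteresis term. Because (\textbf{A5}) is \emph{not} assumed in this theorem, I cannot bound $w'$ pointwise. Instead, the growth bound (\ref{eq1.11}) ensures that $w^{(n)}$ is bounded in $L^\infty(0,1;L^2(\Omega))$ and admits a weak-$\ast$ subsequential limit $w_\infty$; I would then argue, using the piecewise monotonicity of $\mathcal{W}[\cdot;x]$, its continuity on $\mathcal{C}([0,T])$, and the stabilization $u^{(n)}\to u_\infty$, that $w_\infty$ must be independent of $t$. Once this is settled, the hysteresis contribution reduces to $-\int_0^1\langle w_\infty,\psi\rangle\chi'(t)\,dt=0$ by the compact support of $\chi$, and the limit equation is genuinely stationary. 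Rigorously justifying the time-independence of $w_\infty$, i.e.\ showing that the accumulated memory of the hysteresis contributes only a $t$-constant profile in the long-time limit, is where the bulk of the analytical work will lie.
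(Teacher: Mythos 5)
Your proposal follows essentially the same route as the paper's proof: strong $L^{p}$-compactness of the trajectory from the uniform $W_{0}^{1,p}(\Omega)$ bound, time-shifted problems on $\Omega\times(0,1)$, the decay $\int_{t_{n}}^{t_{n}+1}\|u'(\cdot,s)\|_{L^{2}(\Omega)}^{2}\,ds\to 0$ forcing the limit profile to be time-independent and equal to $u_{\infty}$, a Minty-type identification of the diffusion term, and test functions $\chi(t)\phi(x)$ with $\chi$ compactly supported to eliminate the hysteresis contribution. The only cosmetic differences are that you cite (\ref{1.9}) where the paper uses the (\textbf{A5})-free a priori bound (\ref{eq1.7}), and that you make explicit the integration by parts in the hysteresis term (and the attendant difficulty of justifying the time-independence of the limit of $w^{(n)}$) that the paper passes over more quickly.
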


It is important to note that assumption (A3)$_{1}$ on $f$ entails the
continuity of $f$ with respect to $t$, so that we could define $f(\cdot,t)$
for any $t\geq0$. It is therefore made only for that purpose. It can thus be
replaced by $f\in L^{2}(\mathbb{R}_{+};L^{2}(\Omega))\cap\mathcal{C}%
(\mathbb{R}_{+};L^{2}(\Omega))$. However, our main purpose in proving the
existence of the solution of (\ref{eq1}) is to looking for the qualitative
properties of the solutions $u$ of (\ref{eq1}) under a more general assumption
on the behaviour of the source term $f$ and on the coefficient functions
$\boldsymbol{a}(\cdot,\lambda)$ with respect to both the time scale
$\tau=t/\varepsilon_{n}$ and the space scale $y=x/\varepsilon_{n}$ when the
coefficients depend on $y=x/\varepsilon_{n}$ and $\tau=t/\varepsilon_{n}$,
where $\varepsilon_{n}$ is a sequence of positive real numbers verifying
$0<\varepsilon_{n}\leq1$ with $\varepsilon_{n}\rightarrow0$ as $n\rightarrow
\infty$. This falls within the scope of homogenization theory, and depends
carefully on properties of the coefficients of the operators in (\ref{eq1}).
This is another issue which will be addressed in a very subsequent work.

\section{Some useful properties of the function $\boldsymbol{a}$$(x,\lambda)$
and a preliminary estimate\label{subsec1.2}}

We need to derive some useful properties of the function $\boldsymbol{a}$.
Since the function $J(x,\cdot)$ is convex and has a growth of order $p$ (see
in particular the right-hand side of the inequality in (\ref{0.2})) it emerges
from \cite[Proof of Theorem 2.1]{Marcellini} that
\begin{equation}
\left\vert \nabla_{\lambda}J(x,\lambda)\right\vert \leq C_{3}(1+\left\vert
\lambda\right\vert ^{p-1})\text{ for all }\lambda\in\mathbb{R}^{d}\text{, a.e.
}x\in\Omega. \label{0.3}%
\end{equation}
Indeed, since $J(x,\cdot)$ is convex and differentiable, it holds that
\begin{equation}
J(x,\lambda)-J(x,\mu)\geq\nabla_{\lambda}J(x,\mu)\cdot(\lambda-\mu)\text{ for
all }\lambda,\mu\in\mathbb{R}^{d}\text{ and a.e. }x\in\Omega. \label{0.4}%
\end{equation}
Choosing $\mu=\lambda+he_{i}$ with $h\in\mathbb{R}$ and $e_{i}$ the $i$th
vector of the canonical basis of $\mathbb{R}^{d}$, it follows that
\begin{align*}
\frac{\partial J}{\partial\lambda_{i}}(x,\lambda)  &  \leq\frac{1}%
{h}(J(x,\lambda)-J(x,\lambda+he_{i}))\text{ if }h>0\\
\frac{\partial J}{\partial\lambda_{i}}(x,\lambda)  &  \geq\frac{1}%
{h}(J(x,\lambda)-J(x,\lambda+he_{i}))\text{ if }h<0.
\end{align*}
Hence, taking $\left\vert h\right\vert =\left\vert \lambda\right\vert +1$
above and using the right-hand side of (\ref{0.2}), we are led to
\begin{align*}
\left\vert \frac{\partial J}{\partial\lambda_{i}}(x,\lambda)\right\vert  &
\leq\frac{1}{\left\vert h\right\vert }\left(  J(x,\lambda)+J(x,\lambda
+he_{i})\right) \\
&  \leq C_{2}\frac{1+(\left\vert \lambda\right\vert +1)^{p}+\left\vert
\lambda\right\vert ^{p}}{\left\vert \lambda\right\vert +1}\leq C_{3}%
(1+\left\vert \lambda\right\vert ^{p-1})
\end{align*}
since $p>1$, where $C_{3}$ depends on $C_{2}$ and $p$. We also infer from
(\ref{0.3}) that%

\[
\left\vert J(x,\lambda)-J(x,\mu)\right\vert \leq c_{2}(1+\left\vert
\lambda\right\vert ^{p-1}+\left\vert \mu\right\vert ^{p-1})\left\vert
\lambda-\mu\right\vert \text{, all }\lambda,\mu\in\mathbb{R}^{d}\text{ and
a.e. }x\in\Omega.
\]
It also follows from (\ref{0.4}) that
\[
\nabla_{\lambda}J(x,\lambda)\cdot(\lambda-t\mu)\geq J(x,\lambda)-J(x,t\mu)
\]
for all $\lambda,\mu\in\mathbb{R}^{d}$, $t\geq0$ and a.e. $x\in\Omega$.
Letting $t\rightarrow0$ and using the left-hand side of (\ref{0.2}), we get
\[
\nabla_{\lambda}J(x,\lambda)\cdot\lambda\geq c_{1}\left\vert \lambda
\right\vert ^{p}-J(x,0).
\]
Another consequence of (\ref{0.4}) is the monotonicity of $\nabla_{\lambda
}J(x,\cdot)$ expressed as follows:
\begin{equation}
(\nabla_{\lambda}J(x,\lambda)-\nabla_{\lambda}J(x,\mu))\cdot(\lambda-\mu
)\geq0\text{, all }\lambda,\mu\in\mathbb{R}^{d}\text{, a.e. }x\in\Omega.
\label{0.6}%
\end{equation}
Indeed, (\ref{0.4}) yields
\[
J(x,\lambda)-J(x,\mu)\geq\nabla_{\lambda}J(x,\mu)\cdot(\lambda-\mu)
\]
and
\[
J(x,\mu)-J(x,\lambda)\geq\nabla_{\lambda}J(x,\lambda)\cdot(\mu-\lambda).
\]
Adding these inequalities together, we obtain (\ref{0.6}).

We summarize the above properties of $\boldsymbol{a}$ here below.

\begin{itemize}
\item[(\textbf{A6})] The function $\boldsymbol{a}:(x,\lambda)\mapsto
\boldsymbol{a}(x,\lambda)$ from $\Omega\times\mathbb{R}^{d}$ to $\mathbb{R}%
^{d}$ is therefore constrained as follows:

\begin{itemize}
\item[(H)$_{1}$] For each given $\lambda\in\mathbb{R}^{d}$, the function
$x\mapsto\boldsymbol{a}(x,\lambda)$ is measurable from $\Omega$ into
$\mathbb{R}^{d}$.

\item[(H)$_{2}$] There exists a positive constant $\alpha_{3}$ such that
$\boldsymbol{a}(x,\lambda)\cdot\lambda\geq\alpha_{1}\left\vert \lambda
\right\vert ^{p}-\alpha_{3}$, where $\alpha_{3}=\left\Vert J(\cdot
,0)\right\Vert _{L^{\infty}(\Omega)}$.

\item[(H)$_{3}$] There is a constant $C_{2}>0$, such that, a.e. in $x\in
\Omega$, for $\lambda_{1},\lambda_{2}\in\mathbb{R}^{d}$,%
\[
(\boldsymbol{a}(x,\lambda_{1})-\boldsymbol{a}(x,\lambda_{2}))\cdot(\lambda
_{1}-\lambda_{2})\geq0,
\]%
\[
\left\vert \boldsymbol{a}(x,\lambda_{1})-\boldsymbol{a}(x,\lambda
_{2})\right\vert \leq C_{2}(1+\left\vert \lambda_{1}\right\vert +\left\vert
\lambda_{2}\right\vert )^{p-2}\left\vert \lambda_{1}-\lambda_{2}\right\vert ,
\]
where the dot denotes the usual Euclidean inner product in $\mathbb{R}^{d}$,
and $\left\vert \cdot\right\vert $ the associated norm.
\end{itemize}
\end{itemize}

The following result will be of interest in the sequel.

\begin{lemma}
\label{l1.1}Let $\boldsymbol{a}:\Omega\times\mathbb{R}^{d}\rightarrow
\mathbb{R}^{d}$ satisfy \emph{(\textbf{A6})} above. Assume that $u\in
L^{\infty}(0,T;W_{0}^{1,p}(\Omega))$, $u^{\prime}\in L^{2}(0,T;H_{0}%
^{1}(\Omega))$ and ${\Div}\boldsymbol{a}(\cdot,\nabla u)\in L^{p^{\prime}%
}(0,T;W^{-1,p^{\prime}}(\Omega))$. Then the function $t\mapsto\sigma
(u(t))=\int_{\Omega}J(\cdot,\nabla u(t))dx$ is absolutely continuous on
$(0,T)$ and
\begin{equation}
\frac{d}{dt}\sigma(u(t))=-\left\langle {\Div}\boldsymbol{a}(\cdot,\nabla
u(t)),u^{\prime}(t)\right\rangle \text{ for a.e. }t\in\lbrack0,T] \label{1.1}%
\end{equation}
where $u(t)=u(\cdot,t)$ and $\left\langle {\cdot},\cdot\right\rangle $ denotes
the duality pairing between $L^{p^{\prime}}(0,T;W^{-1,p^{\prime}}(\Omega))$
and $L^{p}(0,T;W_{0}^{1,p}(\Omega))$.
\end{lemma}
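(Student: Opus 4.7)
My approach rests on the convex-function chain rule, exploiting that $\boldsymbol{a}(x,\cdot)=\nabla_\lambda J(x,\cdot)$ with $J(x,\cdot)$ strictly convex and differentiable (by (\textbf{A1})(ii)). The two-sided subgradient inequality
\[
\boldsymbol{a}(x,\mu)\cdot(\lambda-\mu) \leq J(x,\lambda) - J(x,\mu) \leq \boldsymbol{a}(x,\lambda)\cdot(\lambda-\mu),
\]
which follows from (\ref{0.4}) by swapping the roles of $\lambda$ and $\mu$, holds for all $\lambda,\mu\in\mathbb{R}^d$ and a.e.\ $x\in\Omega$. Taking $\lambda=\nabla u(t)$, $\mu=\nabla u(s)$ with $0\le s<t\le T$, integrating over $\Omega$, and using the identity $\nabla u(t)-\nabla u(s)=\int_s^t\nabla u'(\tau)\,d\tau$ in $L^2(\Omega)^d$ (available from $u'\in L^2(0,T;H_0^1(\Omega))$) together with Fubini, I would arrive at the sandwich
\[
\int_s^t\!\!\int_\Omega \boldsymbol{a}(\cdot,\nabla u(s))\cdot\nabla u'(\tau)\,dx\,d\tau \;\leq\; \sigma(u(t))-\sigma(u(s)) \;\leq\; \int_s^t\!\!\int_\Omega \boldsymbol{a}(\cdot,\nabla u(t))\cdot\nabla u'(\tau)\,dx\,d\tau.
\]

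Absolute continuity of $t\mapsto\sigma(u(t))$ follows readily from this sandwich: by (H)$_3$ and $u\in L^\infty(0,T;W_0^{1,p}(\Omega))$, the family $\{\boldsymbol{a}(\cdot,\nabla u(r))\}_r$ is uniformly bounded in $L^{p'}(\Omega)^d$, and since $p\ge 2$ on a bounded domain $L^{p'}(\Omega)\hookrightarrow L^2(\Omega)$; Cauchy--Schwarz pairs it against $\nabla u'(\tau)\in L^2(\Omega)^d$ to yield an $L^1_\tau$ control that transfers summability across any disjoint family of subintervals. To identify the derivative at a.e.\ $t$, I divide the sandwich by $h=t-s>0$ and let $s\to t^-$. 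The upper bound rewrites as $\int_\Omega\boldsymbol{a}(\cdot,\nabla u(t))\cdot\tfrac{\nabla u(t)-\nabla u(s)}{h}\,dx$ and tends, at every Lebesgue point of $\nabla u'$, to $\int_\Omega\boldsymbol{a}(\cdot,\nabla u(t))\cdot\nabla u'(t)\,dx=-\langle\Div\boldsymbol{a}(\cdot,\nabla u(t)),u'(t)\rangle$, since the difference quotient converges strongly to $\nabla u'(t)$ in $L^2(\Omega)^d$.

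The main obstacle is to show the lower bound has the same limit. By monotonicity of $\boldsymbol{a}(x,\cdot)$ in (H)$_3$, the upper minus the lower equals
\[
\tfrac{1}{h}\int_\Omega\bigl[\boldsymbol{a}(\cdot,\nabla u(t))-\boldsymbol{a}(\cdot,\nabla u(s))\bigr]\cdot\bigl(\nabla u(t)-\nabla u(s)\bigr)\,dx\ge 0,
\]
so the question is whether this monotonicity defect is $o(1)$ as $h\to 0^+$. For $p=2$ this is straightforward because (H)$_3$ gives the bound $Ch^{-1}\|\nabla u(t)-\nabla u(s)\|_{L^2}^2\leq C\int_s^t\|\nabla u'(\tau)\|_{L^2}^2\,d\tau\to 0$ for a.e.\ $t$. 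For $p>2$ the H\"older bound via (H)$_3$ involves $\|\nabla u(t)-\nabla u(s)\|_{L^p}^2$, which does not automatically decay; I would circumvent this by regularizing $u$ in time (mollification $u_\varepsilon=\rho_\varepsilon\ast u$), applying the pointwise chain rule to the smooth $u_\varepsilon$, and then passing to the limit $\varepsilon\to 0$ by combining the uniform $L^\infty_tL^p_x$ bound on $\nabla u$, the a.e.\ convergence extracted from the $L^2$-strong continuity $t\mapsto\nabla u(t)$, and a Vitali equi-integrability argument to handle the nonlinear term $\boldsymbol{a}(\cdot,\nabla u_\varepsilon)$.
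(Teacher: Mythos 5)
Your starting point, the two-sided convexity inequality derived from (\ref{0.4}), is exactly the one the paper uses, but the way you deploy it diverges in a manner that creates a genuine gap for $p>2$. You apply the sandwich at two fixed times $s<t$ and then try to pass to the limit pointwise in $t$. This forces you to pair $\boldsymbol{a}(\cdot,\nabla u(r))$, which the growth bound (\ref{0.3}) only places in $L^{p'}(\Omega)^{d}$, against $\nabla u'(\tau)\in L^{2}(\Omega)^{d}$. Your justification of absolute continuity rests on the claim that $L^{p'}(\Omega)\hookrightarrow L^{2}(\Omega)$ because $p\geq 2$; this is backwards: for $p>2$ one has $p'<2$, so on a bounded domain it is $L^{2}(\Omega)$ that embeds into $L^{p'}(\Omega)$, and a uniform $L^{p'}$ bound on $\boldsymbol{a}(\cdot,\nabla u(r))$ gives no $L^{2}$ control. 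Consequently the Cauchy--Schwarz step, and indeed the very integrability of the products $\boldsymbol{a}(\cdot,\nabla u(t))\cdot\nabla u'(\tau)$ appearing in your sandwich, is not established when $p>2$ (note $\tfrac{1}{p'}+\tfrac{1}{2}>1$). The second gap is the one you flag yourself: the monotonicity defect between your upper and lower bounds need not be $o(1)$ for $p>2$, and the mollification you propose as a remedy is only sketched; when you try to pass to the limit in $\int_{t_1}^{t_2}\int_{\Omega}\boldsymbol{a}(\cdot,\nabla u_{\varepsilon})\cdot\nabla u_{\varepsilon}'\,dx\,d\tau$ you meet the same $L^{p'}$-versus-$L^{2}$ pairing obstruction, so the fix does not close the argument as stated.

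The paper's proof avoids both difficulties by integrating the same convexity inequality, with increment $\nabla u(\tau+h)-\nabla u(\tau)$, over a sliding window $\tau\in(t_{1},t_{2})$ \emph{before} letting $h\rightarrow 0$. Translation invariance makes the middle term telescope into $\tfrac{1}{h}\int_{t_{2}}^{t_{2}+h}\int_{\Omega}J\,dx\,d\tau-\tfrac{1}{h}\int_{t_{1}}^{t_{1}+h}\int_{\Omega}J\,dx\,d\tau$, while the two outer terms become $-\int_{t_{1}}^{t_{2}}\left\langle \Div\boldsymbol{a}(\cdot,\nabla u(\cdot)),\tfrac{u(\tau+h)-u(\tau)}{h}\right\rangle d\tau$, in which the difference quotient lies in $W_{0}^{1,p}(\Omega)$ because $u\in L^{\infty}(0,T;W_{0}^{1,p}(\Omega))$; the natural $W^{-1,p'}$--$W_{0}^{1,p}$ duality is therefore available, and the limit $h\rightarrow 0$ is taken only in the time-integrated sense, at Lebesgue points of $t\mapsto\sigma(u(t))$. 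If you wish to keep your pointwise route you would at minimum have to restrict to $p=2$ (where your argument does work) or impose extra integrability on $\nabla u$; otherwise you should adopt the windowed, telescoping version of the sandwich.
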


\begin{proof}
Let $h>0$ be arbitrarily fixed. Then using inequality (\ref{0.4}), we obtain,
for a.e. $t\in(0,T)$,
\begin{align*}
\boldsymbol{a}(\cdot,\nabla u(t))\cdot(\nabla u(t+h)-\nabla u(t))  &  \leq
J(\cdot,\nabla u(t+h))-J(\cdot,\nabla u(t))\\
&  \leq\boldsymbol{a}(\cdot,\nabla u(t+h))\cdot(\nabla u(t+h)-\nabla u(t)).
\end{align*}
Integrating the above inequalities with respect to $(x,t)$, we have, for a.e.
$0\leq t_{1}\leq t_{2}\leq T$,
\begin{equation}%
\begin{array}
[c]{l}%
-%
{\displaystyle\int_{t_{1}}^{t_{2}}}
\left\langle {\Div}\boldsymbol{a}(\cdot,\nabla u(\tau)),\frac{u(\tau
+h)-u(\tau)}{h}\right\rangle d\tau\\
=\frac{1}{h}%
{\displaystyle\int_{t_{1}}^{t_{2}}}
{\displaystyle\int_{\Omega}}
\boldsymbol{a}(\cdot,\nabla u(\tau))\cdot(\nabla u(\tau+h)-\nabla
u(\tau))dxd\tau\\
\leq\frac{1}{h}%
{\displaystyle\int_{t_{1}}^{t_{2}}}
{\displaystyle\int_{\Omega}}
J(\cdot,\nabla u(\tau+h))dxd\tau-\frac{1}{h}%
{\displaystyle\int_{t_{1}}^{t_{2}}}
{\displaystyle\int_{\Omega}}
J(\cdot,\nabla u(\tau))dxd\tau\\
\leq\frac{1}{h}%
{\displaystyle\int_{t_{1}}^{t_{2}}}
{\displaystyle\int_{\Omega}}
\boldsymbol{a}(\cdot,\nabla u(\tau+h))\cdot(\nabla u(\tau+h)-\nabla
u(\tau))dxd\tau\\
=-%
{\displaystyle\int_{t_{1}}^{t_{2}}}
\left\langle {\Div}\boldsymbol{a}(\cdot,\nabla u(\tau+h)),\frac{u(\tau
+h)-u(\tau)}{h}\right\rangle d\tau
\end{array}
\label{2*}%
\end{equation}
where $\left\langle \cdot,\cdot\right\rangle $ denotes the duality pairings
between $L^{p^{\prime}}(0,T;W^{-1,p^{\prime}}(\Omega))$ and $L^{p}%
(0,T;W_{0}^{1,p}(\Omega))$. But
\[%
{\displaystyle\int_{t_{1}}^{t_{2}}}
{\displaystyle\int_{\Omega}}
J(\cdot,\nabla u(\tau+h))dxd\tau=%
{\displaystyle\int_{t_{1}+h}^{t_{2}+h}}
{\displaystyle\int_{\Omega}}
J(\cdot,\nabla u(\tau))dxd\tau
\]
and $%
{\displaystyle\int_{t_{1}+h}^{t_{2}+h}}
-%
{\displaystyle\int_{t_{1}}^{t_{2}}}
=%
{\displaystyle\int_{t_{2}}^{t_{2}+h}}
-%
{\displaystyle\int_{t_{1}}^{t_{1}+h}}
$, so that (\ref{2*}) becomes
\begin{equation}%
\begin{array}
[c]{l}%
-%
{\displaystyle\int_{t_{1}}^{t_{2}}}
\left\langle {\Div}\boldsymbol{a}(\cdot,\nabla u(\tau)),\frac{u(\tau
+h)-u(\tau)}{h}\right\rangle d\tau\\
\leq\frac{1}{h}%
{\displaystyle\int_{t_{2}}^{t_{2}+h}}
{\displaystyle\int_{\Omega}}
J(\cdot,\nabla u(\tau))dxd\tau-\frac{1}{h}%
{\displaystyle\int_{t_{1}}^{t_{1}+h}}
{\displaystyle\int_{\Omega}}
J(\cdot,\nabla u(\tau))dxd\tau\\
\leq-%
{\displaystyle\int_{t_{1}}^{t_{2}}}
\left\langle {\Div}\boldsymbol{a}(\cdot,\nabla u(\tau+h)),\frac{u(\tau
+h)-u(\tau)}{h}\right\rangle d\tau.
\end{array}
\label{3*}%
\end{equation}
We recall that all the integrals involved in (\ref{3*}) are well defined
according to the assumption (\textbf{A6}) and the fact that $u\in L^{\infty
}(0,T;W_{0}^{1,p}(\Omega))$. Letting $h\rightarrow0$ in (\ref{3*}) (where we
use the other assumptions in Lemma \ref{l1.1}) yields, for a.e. $t_{1},t_{2}%
$,
\begin{align*}
-%
{\displaystyle\int_{t_{1}}^{t_{2}}}
\left\langle {\Div}\boldsymbol{a}(\cdot,\nabla u(\tau)),u^{\prime}%
(\tau)\right\rangle d\tau &  =%
{\displaystyle\int_{\Omega}}
J(\cdot,\nabla u(t_{2}))dx-%
{\displaystyle\int_{\Omega}}
J(\cdot,\nabla u(t_{1}))dx\\
&  =\sigma(u(t_{2}))-\sigma(u(t_{1})),
\end{align*}
thereby showing that the mapping $t\mapsto\sigma(u(t))$ is absolutely
continuous on $(0,T)$ and that (\ref{1.1}) is satisfied.
\end{proof}

\begin{remark}
\label{r3.1}\emph{Note that Lemma \ref{l1.1} remains true if the assumptions }%
\[
u^{\prime}\in L^{2}(0,T;H_{0}^{1}(\Omega))\text{\emph{ and }}{\Div}%
\boldsymbol{a}(\cdot,\nabla u)\in L^{p^{\prime}}(0,T;W^{-1,p^{\prime}}%
(\Omega))
\]
\emph{are replaced by the following ones therein: }%
\begin{equation}
u^{\prime}\in L^{2}(0,T;L^{2}(\Omega))\text{\emph{ and }}{\Div}\boldsymbol{a}%
(\cdot,\nabla u)\in L^{2}(0,T;L^{2}(\Omega)),\label{1.8}%
\end{equation}
\emph{the other ones remaining unchanged. In that case, the duality pairings
}$\left\langle \cdot,\cdot\right\rangle $\emph{ will be replaced by the inner
product in }$L^{2}(0,T;L^{2}(\Omega))$\emph{ and we will proceed by
approximation like in \cite[Proposition 2.11]{Brezis2} to obtain (\ref{1.1})
for the approximating sequence and conclude like in \cite[Lemma 3.3]{Brezis2}
after a limit passage.}
\end{remark}

This being so, to see what regularity can be expected for a solution to
(\ref{eq1}), we first present an informal argument. We test (\ref{eq1}) by
$u^{\prime}=\frac{\partial u}{\partial t}$ and integrate over $\Omega$ to
obtain, for $t>0$,%

\begin{equation}
\int_{\Omega}{c}\left\vert {u}^{\prime}{(t)}\right\vert ^{2}{dx}+\int_{\Omega
}{u}^{\prime}(t){w}^{\prime}{(t)dx}-{\left\langle {\Div}\boldsymbol{a}%
(\cdot,\nabla u(t)),u^{\prime}(t)\right\rangle }=\int_{\Omega}{u}^{\prime
}{(t)f(t)dx} \label{eq1.6}%
\end{equation}
where we have used the abbreviation $u(t)=u(\cdot,t)$. Assuming $\mathcal{W}%
[\cdot;x]$ is piecewise monotone for every $x\in\Omega$, then we have
$u^{\prime}w^{\prime}\geq0$, so that the second term of the left-hand side of
(\ref{eq1.6}) becomes non-negative, \textit{i.e.} $\int_{\Omega}{u}^{\prime
}(t){w}^{\prime}{(t)dx}\geq0$. Since (see (\ref{1.1}))
\[
-\left\langle {\Div}\boldsymbol{a}(\cdot,\nabla u(t)),u^{\prime}%
(t)\right\rangle =\frac{d}{dt}\sigma(u(t))
\]
where
\begin{equation}
\sigma(u(t))=\int_{\Omega}J(x,\nabla{u(x,t)})dx, \label{1.3}%
\end{equation}
we integrate (\ref{eq1.6}) with respect to $t$ and apply appropriate Young's
inequality to its right-hand side to get
\[
\alpha\int_{0}^{t}\int_{\Omega}\left\vert u^{\prime}(\tau)\right\vert
^{2}dxd\tau+\sigma(u(t))-\sigma(u_{0})\leq\frac{1}{2\alpha}\int_{0}^{t}%
\int_{\Omega}\left\vert f\right\vert ^{2}dxd\tau+\frac{\alpha}{2}\int_{0}%
^{t}\int_{\Omega}\left\vert u^{\prime}(\tau)\right\vert ^{2}dxd\tau,
\]
where we have also used the inequality $c\geq\alpha$ in assumption
(\textbf{A2}). Using the left-hand side of inequality (\ref{0.2}), we infer
\[
\int_{0}^{t}\int_{\Omega}\left\vert u^{\prime}(t)\right\vert ^{2}%
dxd\tau+\left\Vert \nabla u(t)\right\Vert _{L^{p}(\Omega)}^{p}\leq C\int
_{0}^{T}\int_{\Omega}\left\vert f\right\vert ^{2}dxdt+C\left\Vert \nabla
u_{0}\right\Vert _{L^{p}(\Omega)}^{p}%
\]
where $C$ depends only on $\alpha$, $\alpha_{1}$ and $p$. Hence we find the a
priori estimate%

\begin{equation}
\int_{0}^{t}\int_{\Omega}\left\vert u^{\prime}(t)\right\vert ^{2}%
dxdt+\sup_{0\leq t\leq T}\left\Vert \nabla u(t)\right\Vert _{L^{p}(\Omega
)}^{p}\leq C\int_{0}^{T}\int_{\Omega}\left\vert f\right\vert ^{2}%
dxdt+C\left\Vert \nabla u_{0}\right\Vert _{L^{p}(\Omega)}^{p}. \label{eq1.7}%
\end{equation}
Since $f\in L^{2}(0,T;L^{2}(\Omega))\equiv L^{2}(Q)$ and $u_{0}\in W_{0}%
^{1,p}(\Omega)$, we are thus led to look for weak solutions $u$ in the space
\[
Y=L^{\infty}(0,T;W_{0}^{1,p}(\Omega))\cap H^{1}\left(  0,T;L^{2}%
(\Omega)\right)  ,
\]
which implies $u\in\mathcal{C}([0,T],L^{2}(\Omega))$ thanks to the continuous
imbedding
\[
Y\hookrightarrow\mathcal{C}([0,T],L^{2}(\Omega)).
\]
Since $u(x,\cdot)$ is the input of the hysteresis operator $\mathcal{W}%
[\cdot;x]$ at each space point $x\in\Omega$, the compactness of the imbedding%

\begin{equation}
Y\hookrightarrow L^{2}(\Omega;\mathcal{C}\left(  [0,T]\right)  ) \label{eq8'}%
\end{equation}
will play a crucial role in the existence proof if the hysteresis operators
$\mathcal{W}[\cdot;x]$ are continuous on $\mathcal{C}\left(  [0,T]\right)  $.
The proof of the compactness of the embedding (\ref{eq8'}) is given in
\cite[Corollary 3.2.3]{Brokate} for $d=1$; for $d>1$, we follow the argument
of Visintin based on interpolation theory, and obtain the result from the
chain of continuous imbeddings%

\begin{equation}
Y\hookrightarrow H^{1}(Q)\hookrightarrow H^{\sigma}\left(  \Omega;H^{1-\sigma
}(0,T)\right)  \hookrightarrow L^{2}\left(  \Omega;\mathcal{C}\left(
[0,T]\right)  \right)  \label{eq9'}%
\end{equation}
for $0<\sigma<\frac{1}{2}$, where the last imbedding is compact.

\section{Proof of Theorem \ref{t4.1}: Existence result\label{sec3}}

For the proof of the existence result, we proceed in three steps listed in the
following subsections.

\subsection{\textbf{Approximation and existence of approximate solutions}%
\label{subsec3.1.1}}

We have to approximate our model problem by employing an implicit time
discretization scheme of (\ref{eq14}). To this end, let $\ell\in
\mathbb{N}\backslash\{0\}$ ($\mathbb{N}$ the set of nonnegative integers) be
given, and set $h=T/\ell$ where $T$ is the final time. In the sequel, we will
denote by $C$, some positive constants that may depend on $\Omega$, $T$, $f$,
and the initial data, but neither on $\ell$ and nor on $m\in\left\{
1,...,\ell\right\}  $.

For $1\leq m\leq\ell$, we consider the semidiscrete problem on the time level
$t=mh$ for the unknown functions $u_{m},w_{m}:\Omega\rightarrow\mathbb{R}$
given by
\begin{equation}%
\begin{array}
[c]{l}%
\frac{1}{h}%
{\displaystyle\int_{\Omega}}
{c(u_{m}-u_{m-1})\varphi dx}+\frac{1}{h}%
{\displaystyle\int_{\Omega}}
{(w_{m}-w_{m-1})\varphi dx}+%
{\displaystyle\int_{\Omega}}
{\boldsymbol{a}(x,\nabla u_{m})\cdot\nabla\varphi dx}\\
\ \ \ \ \ \ \ \ \ =%
{\displaystyle\int_{\Omega}}
{f_{m}\varphi dx}\text{ for all }\varphi\in W_{0}^{1,p}(\Omega)
\end{array}
\label{eq15}%
\end{equation}%
\begin{equation}
w_{m}(x)=\mathcal{W}_{f}((u_{0}(x),\ldots,u_{m}(x));x)\ \text{for a.e }%
x\in\Omega\label{eq16}%
\end{equation}
where $u_{0}(x)=u(x,0)$ is given by Assumption (\textbf{A3}) and
\begin{equation}
u_{m}(x)=u(x,mh)\text{ and }f_{m}(x)=\frac{1}{h}\int_{(m-1)h}^{mh}%
{f(x,t)dt},\ \ w_{0}(x)=\mathcal{W}_{f}\left(  u_{0}(x);x\right)  .
\label{eq17}%
\end{equation}

We can rewrite (\ref{eq15}) in the following form:
\begin{equation}
c\frac{u_{m}-u_{m-1}}{h}+\frac{w_{m}-w_{m-1}}{h}-{\Div}\boldsymbol{a}(x,\nabla
u_{m})=f_{m}\text{ in }W^{-1,p^{\prime}}(\Omega). \label{eq1916}%
\end{equation}

We rewrite (\ref{eq15}) as a semilinear variational equation,
\begin{equation}
b(u_{m},\varphi)+\int_{\Omega}{b_{m}\left(  x,u_{m}(x)\right)  \varphi
(x)dx}=0\text{ for all }\varphi\in W_{0}^{1,p}(\Omega) \label{eq1900}%
\end{equation}
where
\[
b(u,\varphi)=\int_{\Omega}{\boldsymbol{a}(x,\nabla u)\cdot\nabla\varphi
dx}\text{ for }u,\varphi\in W_{0}^{1,p}(\Omega)
\]
and the function $b_{m}:\Omega\times\mathbb{R}\rightarrow\mathbb{R}$, which is
defined by
\[%
\begin{array}
[c]{l}%
b_{m}(x,u)=\frac{1}{h}\left(  cu+\mathcal{W}_{f}\left(  (u_{0}(x),...,u_{m-1}%
(x)),u\right)  ;x\right) \\
\ \ \ \ \ \ \ \ \ \ \ \ \ \ \ \ \ \ \ \ -\frac{1}{h}\left(  cu_{m-1}%
(x)+w_{m-1}(x)\right)  -f_{m}(x),
\end{array}
\]
is measurable in $x$ and continuous in $u$. Moreover $b_{m}(x,\cdot)$ is
strictly increasing; indeed $\mathcal{W}[\cdot;x]$ is piecewise increasing for
each fixed $x\in\Omega$. Finally, we infer from (\ref{eq1.11}) that, for all
$(x,u)\in\Omega\times\mathbb{R}$,%
\[
\left\vert b_{m}(x,u)\right\vert \leq\kappa_{2}(x)+C\left(  \left\vert
w_{m-1}(x)\right\vert +\sum_{k=0}^{m-1}{\left\vert u_{k}(x)\right\vert
}\right)  +C\left\vert u\right\vert ,
\]
with a suitable positive constants $C$ and some function $\kappa_{2}\in
L^{2}(\Omega)$. Considering the induction over $m$ (where we have used
\cite[Theorem 1.3.2]{Brokate} for the induction step) associated to Theorem
\ref{t2.4}, we derive, for each $m\in\left\{  1,...,\ell\right\}  $, the
existence of a unique $u_{m}\in W_{0}^{1,p}(\Omega)$ solution to
(\ref{eq1900}). Moreover, the function $w_{m}$, as defined by (\ref{eq16}),
belongs to $L^{2}(\Omega)$.

With the sequence $(u_{m},w_{m})_{m}$ in hands, the next step is to find
appropriate uniform estimates which will be used in order to pass to the limit.

\subsection{\textbf{Uniform estimates}\label{subsec3.1.2}}

The goal here is first to derive the discrete version of (\ref{eq1.7}), and
next to apply it to obtain a continuous version similar to (\ref{eq1.7}), but
for a sequence of linear interpolates of $u_{m}$. The first result reads as follows.

\begin{lemma}
\label{l3.1}Let $u_{m}$ be defined by \emph{(\ref{eq1900})}. Then one has
\begin{equation}
\sum_{m=1}^{\ell}{h\left\Vert \frac{u_{m}-u_{m-1}}{h}\right\Vert
_{L^{2}(\Omega)}^{2}}+\sup_{1\leq k\leq\ell}\left\Vert \nabla u_{k}\right\Vert
_{L^{p}(\Omega)}^{p}\leq C \label{3.3}%
\end{equation}
for all positive integer $\ell$, where $C=C(p,u_{0},\Omega,\alpha,\alpha
_{1},\alpha_{2})$.
\end{lemma}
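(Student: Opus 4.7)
The plan is to mimic at the discrete level the formal a priori derivation that led to (\ref{eq1.7}). Concretely, I would test the semidiscrete equation (\ref{eq15}) at step $m$ with the admissible test function $\varphi=u_{m}-u_{m-1}\in W_{0}^{1,p}(\Omega)$ and control the four resulting terms separately. The first term is coercive thanks to (\textbf{A2}): $\frac{1}{h}\int_{\Omega}c(u_{m}-u_{m-1})^{2}dx\geq \frac{\alpha}{h}\Vert u_{m}-u_{m-1}\Vert_{L^{2}(\Omega)}^{2}$. The hysteresis term is non-negative: applying the piecewise increasing property of $\mathcal{W}[\cdot;x]$ from (\textbf{A4}) pointwise to the finite strings $(u_{0}(x),\ldots,u_{m-1}(x))$ and $(u_{0}(x),\ldots,u_{m}(x))$, through the generating functional $\mathcal{W}_{f}$, gives $(w_{m}(x)-w_{m-1}(x))(u_{m}(x)-u_{m-1}(x))\geq 0$ a.e.\ in $\Omega$. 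The diffusion term is handled by the convexity inequality (\ref{0.4}), which yields
\[
\int_{\Omega}\boldsymbol{a}(x,\nabla u_{m})\cdot\nabla(u_{m}-u_{m-1})\,dx\geq \sigma(u_{m})-\sigma(u_{m-1}),
\]
producing the discrete counterpart of $\frac{d}{dt}\sigma(u(t))$ in the sense of (\ref{1.3}). The right-hand side is controlled by Cauchy--Schwarz and Young's inequality with parameter $\alpha$, giving the absorbable bound $\frac{\alpha}{2h}\Vert u_{m}-u_{m-1}\Vert_{L^{2}(\Omega)}^{2}+\frac{h}{2\alpha}\Vert f_{m}\Vert_{L^{2}(\Omega)}^{2}$.

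Summing the resulting inequality over $m=1,\ldots,k$ telescopes the $\sigma$-terms and produces
\[
\frac{\alpha}{2}\sum_{m=1}^{k}h\left\Vert \frac{u_{m}-u_{m-1}}{h}\right\Vert_{L^{2}(\Omega)}^{2}+\sigma(u_{k})\leq \sigma(u_{0})+\frac{1}{2\alpha}\sum_{m=1}^{k}h\Vert f_{m}\Vert_{L^{2}(\Omega)}^{2}.
\]
To conclude, I would bound $\sigma(u_{0})\leq \alpha_{2}(|\Omega|+\Vert\nabla u_{0}\Vert_{L^{p}(\Omega)}^{p})$ using the upper bound in (\ref{0.2}) together with $u_{0}\in W_{0}^{1,p}(\Omega)$ from (\textbf{A3}), and apply Jensen's inequality to $f_{m}=h^{-1}\int_{(m-1)h}^{mh}f(\cdot,t)\,dt$ to obtain $\sum_{m=1}^{\ell}h\Vert f_{m}\Vert_{L^{2}(\Omega)}^{2}\leq \Vert f\Vert_{L^{2}(Q)}^{2}$. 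Invoking the lower bound $\sigma(u_{k})\geq \alpha_{1}\Vert\nabla u_{k}\Vert_{L^{p}(\Omega)}^{p}$ from (\ref{0.2}) and taking the supremum over $k\in\{1,\ldots,\ell\}$ then yields (\ref{3.3}) with a constant depending only on $p$, $u_{0}$, $\Omega$, $\alpha$, $\alpha_{1}$, $\alpha_{2}$ (and $\Vert f\Vert_{L^{2}(Q)}$).

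The main obstacle is the non-negativity of the hysteresis contribution $(w_{m}-w_{m-1})(u_{m}-u_{m-1})\geq 0$: it is here that the piecewise increasing hypothesis on $\mathcal{W}[\cdot;x]$ is crucial, since in its absence the discrete memory term could have the wrong sign and the energy argument would collapse. Everything else is a faithful discrete replay of the formal computation that produced (\ref{eq1.7}), with the convexity inequality (\ref{0.4}) playing the role of the chain rule from Lemma~\ref{l1.1}.
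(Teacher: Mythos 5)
Your proposal is correct and follows essentially the same route as the paper: test (\ref{eq15}) with $\varphi=u_{m}-u_{m-1}$, use (\textbf{A2}) for coercivity, piecewise monotonicity of $\mathcal{W}$ for the sign of the hysteresis term, the convexity inequality (\ref{0.4}) to telescope the diffusion term into $\sigma(u_{k})-\sigma(u_{0})$, and (\ref{0.2}) plus the bound $\sum_{m}h\Vert f_{m}\Vert_{L^{2}(\Omega)}^{2}\leq\Vert f\Vert_{L^{2}(Q)}^{2}$ to close the estimate. The only cosmetic difference is that you absorb the source term by applying Young's inequality at each step $m$ before summing, whereas the paper first sums, applies the discrete Cauchy--Schwarz inequality to the whole sum, and then applies Young's inequality; both yield (\ref{3.3}) with the same constant dependence.
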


\begin{proof}
Let the integer $m\geq1$ be fixed. We insert $\varphi=u_{m}-u_{m-1}$ in
(\ref{eq15}) and we have
\begin{equation}%
\begin{array}
[c]{l}%
\frac{1}{h}%
{\displaystyle\int_{\Omega}}
{c(u_{m}-u_{m-1})^{2}dx}+\frac{1}{h}%
{\displaystyle\int_{\Omega}}
{(w_{m}-w_{m-1})(u_{m}-u_{m-1})dx}\\
\ \ \ \ \ \ +%
{\displaystyle\int_{\Omega}}
{\boldsymbol{a}(x,\nabla u_{m})\cdot(\nabla u_{m}-\nabla u_{m-1})dx}=%
{\displaystyle\int_{\Omega}}
{f_{m}(u_{m}-u_{m-1})dx.}%
\end{array}
\label{eq1901}%
\end{equation}
Since $\mathcal{W}[\cdot;x]$ is piecewise increasing for every $x\in\Omega$,
it holds $\left(  u_{m}-u_{m-1}\right)  \left(  w_{m}-w_{m-1}\right)  \geq0$
and so, using the fact that the second term of the left-hand side of
(\ref{eq1901}) is non-negative and using Assumption (\textbf{A2}) we obtain
\begin{equation}
\frac{1}{h}\alpha\left\Vert u_{m}-u_{m-1}\right\Vert _{L^{2}(\Omega)}^{2}%
+\int_{\Omega}{\boldsymbol{a}(x,\nabla u_{m})\cdot\left(  \nabla u_{m}-\nabla
u_{m-1}\right)  dx}\leq\int_{\Omega}{f_{m}\left(  u_{m}-u_{m-1}\right)  dx}.
\label{eq21}%
\end{equation}
We sum both sides of (\ref{eq21}) from $m=1$ to $m=k$ (where $1\leq k\leq\ell
$) to get
\begin{equation}
\sum_{m=1}^{k}\frac{1}{h}\alpha\left\Vert u_{m}-u_{m-1}\right\Vert
_{L^{2}(\Omega)}^{2}+\sum_{m=1}^{k}\int_{\Omega}{\boldsymbol{a}(x,\nabla
u_{m})\cdot\left(  \nabla u_{m}-\nabla u_{m-1}\right)  dx}\leq\sum_{m=1}%
^{k}\int_{\Omega}{f_{m}\left(  u_{m}-u_{m-1}\right)  dx}. \label{eq1902}%
\end{equation}
Using Cauchy-Schwarz's inequality on the right-hand side of (\ref{eq1902}), we
obtain
\begin{equation}
\sum_{m=1}^{k}{\int_{\Omega}{f_{m}\left(  u_{m}-u_{m-1}\right)  dx}}%
\leq\left(  \sum_{m=1}^{k}{h\left\Vert f_{m}\right\Vert _{L^{2}(\Omega)}^{2}%
}\right)  ^{\frac{1}{2}}\left(  \sum_{m=1}^{k}{h\left\Vert \frac{u_{m}%
-u_{m-1}}{h}\right\Vert _{L^{2}(\Omega)}^{2}}\right)  ^{\frac{1}{2}}.
\label{eq22}%
\end{equation}
Note that
\begin{equation}
\sum_{m=1}^{k}{h\left\Vert f_{m}\right\Vert _{L^{2}(\Omega)}^{2}}\leq\int
_{0}^{T}{\int_{\Omega}{\left\vert f(x,t)\right\vert ^{2}dx}dt}\leq C.
\label{eq24}%
\end{equation}
Hence we have the inequality
\begin{equation}
\sum_{m=1}^{k}{\int_{\Omega}{f_{m}\left(  u_{m}-u_{m-1}\right)  dx}}\leq
C\left(  \sum_{m=1}^{k}{h\left\Vert \frac{u_{m}-u_{m-1}}{h}\right\Vert
_{L^{2}(\Omega)}^{2}}\right)  ^{\frac{1}{2}}. \label{eq1903}%
\end{equation}
Now we have to deal with the second term of the left-hand side of
(\ref{eq1902}). To this end, we first recall that, according to the definition
of the function $\boldsymbol{a}$ given in Assumption (\textbf{A3}), we have
\begin{equation}
\boldsymbol{a}(x,\nabla u_{m})\cdot(\nabla u_{m}-\nabla u_{m-1})=\nabla
_{\lambda}J(x,\nabla u_{m})\cdot(\nabla u_{m}-\nabla u_{m-1}). \label{eq1904}%
\end{equation}
It follows from (\ref{0.4}) that
\begin{equation}
J(x,\nabla u_{m})-J(x,\nabla u_{m-1})\leq\boldsymbol{a}(x,\nabla u_{m}%
)\cdot(\nabla u_{m}-\nabla u_{m-1}). \label{eq1906}%
\end{equation}
Hence
\begin{equation}
\sum_{m=1}^{k}\left[  J(x,\nabla u_{m})-J(x,\nabla u_{m-1})\right]  \leq
\sum_{m=1}^{k}\boldsymbol{a}(x,\nabla u_{m})\cdot(\nabla u_{m}-\nabla
u_{m-1}), \label{eq1907}%
\end{equation}
that is,
\begin{equation}
J(x,\nabla u_{k})-J(x,\nabla u_{0})\leq\sum_{m=1}^{k}\boldsymbol{a}(x,\nabla
u_{m})\cdot\left(  \nabla u_{m}-\nabla u_{m-1}\right)  . \label{eq23}%
\end{equation}
Integrating (\ref{eq23}) over $\Omega$ gives
\begin{equation}
\int_{\Omega}{\left(  J(x,\nabla u_{k})-J(x,\nabla u_{0})\right)  dx}\leq
{\sum_{m=1}^{k}\int_{\Omega}\boldsymbol{a}(x,\nabla u_{m})\cdot\left(  \nabla
u_{m}-\nabla u_{m-1}\right)  dx.} \label{eq1908}%
\end{equation}
In view of (\ref{0.2}) it holds that
\[
J(x,\nabla u_{0})\leq\alpha_{2}(1+\left\vert \nabla u_{0}\right\vert
^{p})\text{ and }\alpha_{1}\left\vert \nabla u_{k}\right\vert ^{p}\leq
J(x,\nabla u_{k})\text{,}%
\]
so that
\begin{equation}
\alpha_{1}\left\Vert \nabla u_{k}\right\Vert _{L^{p}(\Omega)}^{p}\leq
\int_{\Omega}\left(  {\sum_{m=1}^{k}\boldsymbol{a}(x,\nabla u_{m})\cdot\left(
\nabla u_{m}-\nabla u_{m-1}\right)  }\right)  {dx+\alpha_{2}}\left(
\left\vert \Omega\right\vert {+\left\Vert \nabla u_{0}\right\Vert
_{L^{p}(\Omega)}^{p}}\right)  \label{eq1910}%
\end{equation}
where $\left\vert \Omega\right\vert ={\mathrm{meas}(\Omega)}$ is the Lebesgue
measure of $\Omega$. Summarizing (\ref{eq1902}) to (\ref{eq1910}), we obtain
\begin{equation}%
\begin{array}
[c]{l}%
\sum_{m=1}^{k}\frac{1}{h}\alpha\left\Vert u_{m}-u_{m-1}\right\Vert
_{L^{2}(\Omega)}^{2}+\alpha_{1}\left\Vert \nabla u_{k}\right\Vert
_{L^{p}(\Omega)}^{p}\\
\ \ \ \leq C\left(  \sum_{m=1}^{k}{h\left\Vert \frac{u_{m}-u_{m-1}}%
{h}\right\Vert _{L^{2}(\Omega)}^{2}}\right)  ^{\frac{1}{2}}+\alpha
_{2}(\left\vert \Omega\right\vert +\left\Vert \nabla u_{0}\right\Vert
_{L^{p}(\Omega)}^{p}).
\end{array}
\label{eq1911}%
\end{equation}
Applying Young's inequality to the first term of the right-hand side of
(\ref{eq1911}), we get
\[
C\left(  \sum_{m=1}^{k}{h\left\Vert \frac{u_{m}-u_{m-1}}{h}\right\Vert
_{L^{2}(\Omega)}^{2}}\right)  ^{\frac{1}{2}}\leq\frac{\alpha}{2}\sum
_{m=1}^{\ell}{h\left\Vert \frac{u_{m}-u_{m-1}}{h}\right\Vert _{L^{2}(\Omega
)}^{2}+C.}%
\]
Coming back to (\ref{eq1911}), we end up with the following inequality
\begin{equation}
\sum_{m=1}^{\ell}{h\left\Vert \frac{u_{m}-u_{m-1}}{h}\right\Vert
_{L^{2}(\Omega)}^{2}}+\sup_{1\leq k\leq\ell}\left\Vert \nabla u_{k}\right\Vert
_{L^{p}(\Omega)}^{p}\leq C \label{eq25}%
\end{equation}
where $C=C(p,u_{0},\Omega,\alpha,\alpha_{1},\alpha_{2})>0$.
\end{proof}

Now, we have to define the linear interpolates. In order to emphasize the
dependence on $\ell$ of the sequence $(u_{m})_{m}$, we denote by $u_{{m}%
,{\ell}}$ and $w_{{m},{\ell}}$ respectively the solutions of (\ref{eq15}) and
(\ref{eq16}) for any $\ell\in\mathbb{N}$, and by $f_{{m},{\ell}}$ the averages
defined in (\ref{eq17}) for $0\leq m\leq\ell$. We define the piecewise linear
interpolates as follows:
\[
u_{\ell}\left(  x,(m+\tau)h\right)  =\tau u_{{m+1,\ell}}(x)+(1-\tau
)u_{{m},{\ell}}(x),\quad\tau\in\left[  0,1\right]
\]%
\begin{equation}
w_{\ell}\left(  x,(m+\tau)h\right)  =\tau w_{{m+1,\ell}}(x)+(1-\tau
)w_{{m},{\ell}}(x),\quad\tau\in\left[  0,1\right]  , \label{eq27}%
\end{equation}
as well as the constant interpolates
\[
\tilde{u}_{\ell}\left(  x,(m+\tau)h\right)  =u_{{m+1,\ell}}(x);\ \tilde
{f}_{\ell}\left(  x,(m+\tau)h\right)  =f_{{m+1,\ell}}(x),\ \tau\in\left[
0,1\right]
\]
for $0\leq m\leq\ell-1$. With the above notation, (\ref{eq15}) reads as
\begin{equation}%
\begin{array}
[c]{l}%
{\displaystyle\int_{\Omega}}
{cu_{\ell}^{\prime}(x,t)\varphi(x)dx}+%
{\displaystyle\int_{\Omega}}
{w_{\ell}^{\prime}(x,t)\varphi(x)dx}+%
{\displaystyle\int_{\Omega}}
{\boldsymbol{a}(x,\nabla\tilde{u}_{\ell})\cdot\nabla\varphi(x)dx}\\
\ \ \ \ \ =%
{\displaystyle\int_{\Omega}}
{\tilde{f}_{\ell}(x,t)\varphi(x)dx}\ \ \text{for all }\varphi\in W_{0}%
^{1,p}(\Omega)\text{ and a.e. }t\in(0,T).
\end{array}
\label{eq29}%
\end{equation}
Thus (\ref{eq1916}) becomes
\begin{equation}
c\frac{\partial u_{\ell}}{\partial t}+\frac{\partial w_{\ell}}{\partial
t}-{\Div}\boldsymbol{a}(x,\nabla\tilde{u}_{\ell})=\tilde{f}_{\ell}\text{ in
}W^{-1,p^{\prime}}(\Omega)\text{ a.e. in }(0,T). \label{eq1917}%
\end{equation}

\begin{lemma}
\label{l3.2}Let $u_{\ell}$, $w_{\ell}$ and $\tilde{u}_{\ell}$ satisfying
\emph{(\ref{eq29})}. Then there exists a positive constant $C$ independent of
$\ell$ such that
\begin{equation}
\left\Vert {{u_{\ell}^{\prime}}}\right\Vert _{L^{2}(Q)}^{2}+\sup_{0\leq t\leq
T}\left(  \left\Vert \nabla\tilde{u}_{\ell}(t)\right\Vert _{L^{p}(\Omega)}%
^{p}+\left\Vert \nabla u_{\ell}(t)\right\Vert _{L^{p}(\Omega)}^{p}\right)
\leq C,\label{eq2000}%
\end{equation}%
\[
\left\Vert w_{\ell}\right\Vert _{L^{2}(Q)}\leq C,\ \left\Vert \boldsymbol{a}%
(\cdot,\nabla\tilde{u}_{\ell})\right\Vert _{L^{p^{\prime}}(Q)}\leq C,
\]
and
\[
\left\Vert \frac{\partial}{\partial t}\left(  cu_{\ell}+w_{\ell}\right)
\right\Vert _{L^{p^{\prime}}(0,T;W^{-1,p^{\prime}}(\Omega))}\leq C
\]
for all $\ell\in\mathbb{N}$.
\end{lemma}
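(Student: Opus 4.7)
The plan is to derive each of the four bounds by combining the discrete estimate (\ref{3.3}) of Lemma \ref{l3.1} with the explicit definitions of the piecewise linear and piecewise constant interpolates, then invoking the growth conditions in (\textbf{A4}) and (\textbf{A6}) to close the remaining estimates. Since $u_\ell$ is affine on each subinterval $[mh,(m+1)h]$, its time derivative is constant there, equal to $(u_{m+1,\ell}-u_{m,\ell})/h$, and a direct computation yields
\begin{equation*}
\|u_\ell'\|_{L^2(Q)}^2 \;=\; \sum_{m=1}^{\ell}h\,\Bigl\|\frac{u_{m,\ell}-u_{m-1,\ell}}{h}\Bigr\|_{L^2(\Omega)}^{2},
\end{equation*}
which is controlled by Lemma \ref{l3.1}. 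The supremum $\sup_t\|\nabla\tilde u_\ell(t)\|_{L^p(\Omega)}^p$ is literally $\sup_{1\le k\le\ell}\|\nabla u_{k,\ell}\|_{L^p(\Omega)}^{p}$, while the convexity of $\lambda\mapsto|\lambda|^p$ applied to the linear interpolate gives $\|\nabla u_\ell(t)\|_{L^p(\Omega)}^{p}\le\max(\|\nabla u_{m,\ell}\|_{L^p(\Omega)}^{p},\|\nabla u_{m+1,\ell}\|_{L^p(\Omega)}^{p})$; both quantities are bounded by (\ref{3.3}).

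For the $L^2(Q)$ bound on $w_\ell$, the first step is to control $\|w_{m,\ell}\|_{L^2(\Omega)}$ uniformly in $m$ and $\ell$. By (\textbf{A4}) and the definition (\ref{eq16}),
\begin{equation*}
|w_{m,\ell}(x)| \;\le\; \kappa_0(x) + \gamma_0 \max_{0\le k\le m}|u_{k,\ell}(x)|\quad\text{for a.e.\ }x\in\Omega.
\end{equation*}
Since $u_{k,\ell}(x)=u_\ell(x,kh)$ is the value of the continuous interpolate at a node and $t\mapsto u_\ell(x,t)$ is piecewise linear, $\max_{0\le k\le\ell}|u_{k,\ell}(x)|=\|u_\ell(x,\cdot)\|_{\mathcal{C}([0,T])}$. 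The first estimate shows that $u_\ell$ is uniformly bounded in the space $Y=L^\infty(0,T;W_0^{1,p}(\Omega))\cap H^1(0,T;L^2(\Omega))$, so the continuous embedding $Y\hookrightarrow L^2(\Omega;\mathcal{C}([0,T]))$ recalled in (\ref{eq9'}) yields $\int_\Omega\|u_\ell(x,\cdot)\|_{\mathcal{C}([0,T])}^{2}\,dx\le C$ uniformly in $\ell$. Hence $\sup_{m,\ell}\|w_{m,\ell}\|_{L^2(\Omega)}\le C$, and since $w_\ell$ is a convex combination of two consecutive $w_{m,\ell}$ on $[mh,(m+1)h]$, a telescoping summation gives $\|w_\ell\|_{L^2(Q)}\le C$.

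For the bound on $\boldsymbol{a}(\cdot,\nabla\tilde u_\ell)$ I combine the pointwise estimate deduced from (H)$_3$ with $\lambda_2=0$, namely $|\boldsymbol{a}(x,\lambda)|\le|\boldsymbol{a}(x,0)|+C(1+|\lambda|)^{p-1}$ (note that $\boldsymbol{a}(\cdot,0)$ is bounded by (\ref{0.3})), with the identity $(p-1)p'=p$; raising to the power $p'$ and integrating over $Q$, the only nontrivial contribution is $\int_0^T\|\nabla\tilde u_\ell(t)\|_{L^p(\Omega)}^{p}\,dt\le T\sup_t\|\nabla\tilde u_\ell(t)\|_{L^p(\Omega)}^{p}\le C$. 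Finally, the bound on $\partial_t(cu_\ell+w_\ell)$ in $L^{p'}(0,T;W^{-1,p'}(\Omega))$ follows immediately from equation (\ref{eq1917}): the continuity of ${\Div}:L^{p'}(\Omega)^d\to W^{-1,p'}(\Omega)$ together with the previous estimate takes care of the divergence term, while $\tilde f_\ell$ is uniformly bounded in $L^2(Q)$ (because $\sum_m h\|f_m\|_{L^2(\Omega)}^{2}\le\|f\|_{L^2(Q)}^{2}$), and since $p\ge 2$ we have the chain of continuous imbeddings $L^2(Q)\hookrightarrow L^{p'}(0,T;L^2(\Omega))\hookrightarrow L^{p'}(0,T;W^{-1,p'}(\Omega))$.

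The only genuinely delicate step is the uniform $L^2(\Omega)$ bound on $\max_{0\le k\le\ell}|u_{k,\ell}(\cdot)|$, which cannot be obtained from the discrete estimate of Lemma \ref{l3.1} alone and crucially exploits the continuous imbedding (\ref{eq9'}); the remaining bounds are essentially bookkeeping built on top of Lemma \ref{l3.1} together with the polynomial growth of $\boldsymbol{a}$.
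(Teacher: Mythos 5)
Your proof is correct, and for three of the four bounds it follows essentially the same route as the paper: the affine boundedness of $\mathcal{W}$ combined with the continuous imbedding $Y\hookrightarrow L^{2}(\Omega;\mathcal{C}([0,T]))$ for $w_{\ell}$, the growth bound $|\boldsymbol{a}(x,\lambda)|\leq C(1+|\lambda|^{p-1})$ together with $(p-1)p'=p$ for the flux, and reading the last estimate off equation (\ref{eq1917}) (the paper phrases this through the variational form (\ref{eq30}), which is equivalent). The one place where you genuinely diverge is the derivation of (\ref{eq2000}): the paper re-runs the continuous energy argument, multiplying (\ref{eq1917}) by $u_{\ell}'$, integrating over $Q$, and invoking the piecewise monotonicity of the hysteresis operator again ``mutatis mutandis'' before falling back on (\ref{eq25}); you instead observe that both quantities in (\ref{eq2000}) are exactly (or are dominated by) the discrete quantities already controlled in Lemma \ref{l3.1} --- the identity $\|u_{\ell}'\|_{L^{2}(Q)}^{2}=\sum_{m}h\|(u_{m}-u_{m-1})/h\|_{L^{2}(\Omega)}^{2}$, the fact that $\sup_{t}\|\nabla\tilde u_{\ell}(t)\|_{L^{p}}^{p}$ is a nodal supremum, and convexity of $|\cdot|^{p}$ for the linear interpolate. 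Your route is the more economical one: it avoids having to justify the energy computation at the interpolate level, where the time derivative involves $u_{\ell}$ but the diffusion term involves $\tilde u_{\ell}$, so the continuous argument does not transfer entirely verbatim. The only cosmetic caveats are that the sup in (\ref{3.3}) starts at $k=1$, so on the first subinterval you should also invoke $u_{0}\in W_{0}^{1,p}(\Omega)$ from (\textbf{A3}), and that ``telescoping'' is a slight misnomer for what is simply the bound $|w_{\ell}(x,t)|\leq\max(|w_{m,\ell}(x)|,|w_{m+1,\ell}(x)|)$ followed by integration in $t$; neither affects correctness.
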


\begin{proof}
First of all, it follows from (\ref{eq29}) that
\begin{equation}
\left\{
\begin{array}
[c]{l}%
{\displaystyle\int_{Q}}
({{cu_{\ell}^{\prime}+w_{\ell}^{\prime})\varphi dx}dt}+{%
{\displaystyle\int_{Q}}
{\boldsymbol{a}(\cdot,\nabla\tilde{u}_{\ell})\cdot\nabla\varphi dx}dt}={%
{\displaystyle\int_{Q}}
{\tilde{f}_{\ell}\varphi dx}dt}\\
\ \text{for all }\varphi\in L^{p}(0,T;W_{0}^{1,p}(\Omega)).
\end{array}
\right.  \label{eq30}%
\end{equation}
Proceeding exactly (multiply (\ref{eq1917}) by ${{u_{\ell}^{\prime}}}$ and
integrate over $\Omega\times(0,T)$) as we did in Section \ref{subsec1.2} to
obtain the estimate (\ref{eq1.7}), we get mutatis mutandis:
\[%
{\displaystyle\int_{Q}}
\left\vert u_{\ell}^{\prime}\right\vert ^{2}dxdt+\sup_{0\leq t\leq
T}\left\Vert \nabla\tilde{u}_{\ell}(t)\right\Vert _{L^{p}(\Omega)}^{p}\leq C%
{\displaystyle\int_{Q}}
{\left\vert \tilde{f}_{\ell}\right\vert ^{2}dxdt}+C\left\Vert \nabla
u_{0}\right\Vert _{L^{p}(\Omega)}^{p}.
\]

By virtue of the estimate (\ref{eq25}) we obtain:
\begin{equation}%
{\displaystyle\int_{Q}}
\left\vert {{u_{\ell}^{\prime}}}\right\vert {{^{2}dx}dt}+\sup_{0\leq t\leq
T}\left(  \left\Vert \nabla\tilde{u}_{\ell}(t)\right\Vert _{L^{p}(\Omega)}%
^{p}+\left\Vert \nabla u_{\ell}(t)\right\Vert _{L^{p}(\Omega)}^{p}\right)
\leq C.
\end{equation}

Using the inequality (\ref{0.3}), we get
\[
\left\Vert \boldsymbol{a}(\cdot,\nabla u_{\ell})\right\Vert _{L^{p^{\prime}%
}(Q)}^{p^{\prime}}\leq C(1+\left\Vert \nabla u_{\ell}\right\Vert _{L^{p}%
(Q)}^{p})\leq C
\]
where $C$ is a positive constant depending on $\mathrm{meas}(\Omega)$ and $T$,
but not on $\ell$.

Also
\begin{equation}
\left\Vert u_{\ell}\right\Vert _{L^{p}(0,T;W_{0}^{1,p}(\Omega))\cap
H^{1}(0,T;L^{2}(\Omega))}\leq C\label{eq1912}%
\end{equation}
and
\begin{equation}
\left\Vert \tilde{u}_{\ell}\right\Vert _{L^{\infty}(0,T;W_{0}^{1,p}%
(\Omega))\cap H^{1}(0,T;L^{2}(\Omega))}\leq C\label{eq1913}%
\end{equation}
The inequality (\ref{0.3}) associated to (\ref{eq1913}) yield
\[
\left\Vert \boldsymbol{a}(\cdot,\nabla\tilde{u}_{\ell})\right\Vert
_{L^{p^{\prime}}(Q)}\leq C.
\]
Finally we find from (\ref{eq30}) and (\ref{eq2000}) that for any $\varphi\in
L^{p}(0,T;W_{0}^{1,p}(\Omega))$,
\begin{equation}
\left\vert {\int_{Q}{w_{\ell}^{\prime}\varphi dx}dt}\right\vert \leq
C\left\Vert \varphi\right\Vert _{L^{p}(0,T;W_{0}^{1,p}(\Omega))},\label{eq33}%
\end{equation}
so that
\begin{equation}
\left\Vert w_{\ell}^{\prime}\right\Vert _{L^{p^{\prime}}(0,T;W^{-1,p^{\prime}%
}(\Omega))}\leq C.\label{eq34}%
\end{equation}
According to assumption (\textbf{A4}), $\mathcal{W}$ is affine bounded, i.e.
there exist $L>0$ and $\upsilon\in L^{2}(\Omega)$ such that for any measurable
function $u:\Omega\rightarrow\mathcal{C}([0,T])$ we have
\begin{equation}
\left\Vert \mathcal{W}(u)(x,\cdot)\right\Vert _{\mathcal{C}([0,T])}\leq
L\left\Vert u(x,\cdot)\right\Vert _{\mathcal{C}([0,T])}+\upsilon(x)\text{ a.e.
in }\Omega,\label{eq1918}%
\end{equation}
and using (\ref{eq1912}) and (\ref{eq1918}), we get
\[
\left\Vert w_{\ell}\right\Vert _{L^{2}(Q)}\leq\sqrt{T}\left\Vert w_{\ell
}\right\Vert _{L^{2}(\Omega;\mathcal{C}([0,T]))}\leq\sqrt{T}L\left\Vert
u_{\ell}\right\Vert _{L^{2}(\Omega;\mathcal{C}([0,T]))}+\sqrt{T}\left\Vert
\upsilon\right\Vert _{L^{2}(\Omega)}\leq C.
\]
where $C>0$ is independent of $\ell$. So we obtain
\[
\left\Vert w_{\ell}\right\Vert _{L^{2}(Q)}\leq C.
\]
The same reasoning as in (\ref{eq33}) yields
\[
\left\Vert \frac{\partial}{\partial t}\left(  cu_{\ell}+w_{\ell}\right)
\right\Vert _{L^{p^{\prime}}(0,T;W^{-1,p^{\prime}}(\Omega))}\leq C.
\]

\end{proof}

\subsection{\textbf{Passage to the limit}\label{subsec3.1.3}}

Our goal here is to pass to the limit in each term of the variational
formulation (\ref{eq30}).

The a priori estimates we found in Lemma \ref{l3.2} allow us to conclude that,
by a standard compactness result which can be found in \cite{Lions}, e.g., the
sequence $(u_{\ell})_{\ell}$ stays in a compact subset of $L^{2}(Q)$. Invoking
some well-known results, we derive the existence of $u,\widetilde{u}\in
L^{\infty}(0,T;W_{0}^{1,p}(\Omega)),\mathbf{v}\in L^{p^{\prime}}(Q)^{N}$,
$V\in L^{p^{\prime}}(0,T;W^{-1,p^{\prime}}(\Omega))$ and $w\in L^{2}(Q)$ such
that, up to a subsequence not relabeled, we have
\[
u_{\ell}\rightarrow u\text{ in }L^{p}(0,T;W_{0}^{1,p}(\Omega))\text{-weak and
in }L^{\infty}(0,T;W_{0}^{1,p}(\Omega))\text{-weak}\ast
\]%
\begin{equation}
u_{\ell}\rightarrow u\text{ in }{L^{2}(Q)}\text{-strong}\label{eq1920}%
\end{equation}%
\[
\tilde{u}_{\ell}\rightarrow\widetilde{u}\text{ in }L^{p}(0,T;W_{0}%
^{1,p}(\Omega){)}\text{-weak and in }L^{\infty}(0,T;W_{0}^{1,p}{(\Omega
))}\text{-weak}\ast
\]%
\begin{equation}
w_{\ell}\rightarrow w\text{ in }L^{2}(Q)\text{-weak}\label{eq37}%
\end{equation}%
\[
\frac{\partial u_{\ell}}{\partial t}\rightarrow\frac{\partial u}{\partial
t}\text{ in }L^{2}(Q)\text{-weak}%
\]%
\[
\boldsymbol{a}(\cdot,\nabla\widetilde{u}_{\ell})\rightarrow\mathbf{v}\text{ in
}L^{p^{\prime}}(Q)^{N}\text{-weak}%
\]%
\[
\frac{\partial}{\partial t}(cu_{\ell}+w_{\ell})\rightarrow V\text{ in
}L^{p^{\prime}}(0,T;W^{-1,p^{\prime}}(\Omega))\text{-weak.}%
\]
It follows readily from (\ref{eq1920}) that $cu_{\ell}\rightarrow cu$ in
$L^{2}(Q)$-strong, so that, appealing to (\ref{eq37}), we get at once
\[
V=\frac{\partial}{\partial t}(cu+w).
\]
We deduce from (\ref{eq34}) that $w^{\prime}\in L^{p^{\prime}}%
(0,T;W^{-1,p^{\prime}}(\Omega))$ with $w_{\ell}^{\prime}\rightarrow w^{\prime
}$ in $L^{p^{\prime}}(0,T;W^{-1,p^{\prime}}(\Omega))$-weak. Hence
\[
w\in L^{2}(Q)\text{ with }w^{\prime}\in L^{p^{\prime}}(0,T;W^{-1,p^{\prime}%
}(\Omega))
\]
Let us next check that $u=\widetilde{u}$. To that end, we observe that
\begin{align*}
\left\Vert u_{\ell}-\widetilde{u}_{\ell}\right\Vert _{L^{2}(Q)}^{2} &
=\sum_{m=0}^{\ell-1}\left(  \int_{mh}^{(m+1)h}(1-\tau)^{2}dt\right)
\left\Vert u_{m+1}-u_{m}\right\Vert _{L^{2}(\Omega)}^{2}\\
&  =\sum_{m=0}^{\ell-1}\left\Vert u_{m+1}-u_{m}\right\Vert _{L^{2}(\Omega
)}^{2}\int_{mh}^{(m+1)h}\left(  1+m-\frac{t}{h}\right)  ^{2}dt\\
&  =\frac{h}{3}\sum_{m=0}^{\ell-1}\left\Vert u_{m+1}-u_{m}\right\Vert
_{L^{2}(\Omega)}^{2}\\
&  \leq\frac{Ch}{3}%
\end{align*}
where for the last inequality above, we have used (\ref{3.3}) (in Lemma
\ref{l3.1}). We thus obtain, as $\ell\rightarrow\infty$, $u_{\ell}%
-\widetilde{u}_{\ell}\rightarrow0$ in $L^{2}(Q)$-strong. It follows from
(\ref{eq1920}) that
\[
\widetilde{u}_{\ell}=u_{\ell}+(\widetilde{u}_{\ell}-u_{\ell})\rightarrow
u\text{ in }L^{2}(Q)\text{-strong,}%
\]
so that $u=\widetilde{u}$. We therefore pass to the limit (as $\ell
\rightarrow\infty$) in (\ref{eq30}) and obtain
\[
\left\{
\begin{array}
[c]{l}%
{\displaystyle\int_{Q}}
(cu^{\prime}+w^{\prime})\varphi dxdt+%
{\displaystyle\int_{Q}}
\mathbf{v}\cdot\nabla\varphi dxdt=%
{\displaystyle\int_{Q}}
f\varphi dxdt\\
\text{for all }\varphi\in L^{p}(0,T;W_{0}^{1,p}(\Omega)).
\end{array}
\right.
\]
The next step is to identify the functions $\mathbf{v}$ and $w$ in terms of
$u$. Namely we must show that $\mathbf{v}=\boldsymbol{a}(\cdot,\nabla u)$ and
$w=\mathcal{W}(u)$. Let us first show that $\mathbf{v}=\boldsymbol{a}%
(\cdot,\nabla u)$. We proceed as classically to get, using the monotonicity of
$\boldsymbol{a}(x,\cdot)$ and the equality
\[
\limsup_{\ell\rightarrow\infty}\int_{Q}\boldsymbol{a}(\cdot,\nabla\tilde
{u}_{\ell})\cdot\nabla\tilde{u}_{\ell}dxdt=%
{\displaystyle\int_{Q}}
\mathbf{v}\cdot\nabla udxdt
\]
that
\[
\limsup_{\ell\rightarrow\infty}\int_{Q}\boldsymbol{a}(\cdot,\nabla\tilde
{u}_{\ell})\cdot\nabla(\tilde{u}_{\ell}-u)dxdt=\limsup_{\ell\rightarrow\infty
}\int_{Q}\boldsymbol{a}(\cdot,\nabla\tilde{u}_{\ell})\cdot\nabla\tilde
{u}_{\ell}dxdt-\int_{Q}\boldsymbol{a}(\cdot,\nabla\tilde{u}_{\ell})\cdot\nabla
udxdt\leq0.
\]
This implies that for all $\varphi\in L^{p}(0,T;W_{0}^{1,p}(\Omega))$,
\[
\liminf_{\ell\rightarrow\infty}\int_{Q}\boldsymbol{a}(\cdot,\nabla\tilde
{u}_{\ell})\cdot\nabla(\tilde{u}_{\ell}-\varphi)dxdt\geq\int_{Q}%
\boldsymbol{a}(\cdot,\nabla u)\cdot\nabla(u-\varphi)dxdt
\]
which implies
\begin{equation}
\liminf_{\ell\rightarrow\infty}\int_{Q}\boldsymbol{a}(\cdot,\nabla\tilde
{u}_{\ell})\cdot\nabla\tilde{u}_{\ell}dxdt-%
{\displaystyle\int_{Q}}
\mathbf{v}\cdot\nabla\varphi dxdt\geq\int_{Q}\boldsymbol{a}(\cdot,\nabla
u)\cdot\nabla(u-\varphi)dxdt.\label{6}%
\end{equation}
Choosing $\varphi=u$ in (\ref{6}) yields
\[
\liminf_{\ell\rightarrow\infty}\int_{Q}\boldsymbol{a}(\cdot,\nabla\tilde
{u}_{\ell})\cdot\nabla\tilde{u}_{\ell}dxdt\geq%
{\displaystyle\int_{Q}}
\mathbf{v}\cdot\nabla udxdt.
\]
It follows that
\[
\liminf_{\ell\rightarrow\infty}\int_{Q}\boldsymbol{a}(\cdot,\nabla\tilde
{u}_{\ell})\cdot\nabla\tilde{u}_{\ell}dxdt=%
{\displaystyle\int_{Q}}
\mathbf{v}\cdot\nabla udxdt,
\]
and thus
\[%
{\displaystyle\int_{Q}}
\mathbf{v}\cdot\nabla(u-\varphi)dxdt\geq\int_{Q}\boldsymbol{a}(\cdot,\nabla
u)\cdot\nabla(u-\varphi)dxdt\text{ for all }\varphi\in L^{p}(0,T;W_{0}%
^{1,p}(\Omega)).
\]
We deduce that $\mathbf{v}=\boldsymbol{a}(\cdot,\nabla u)$. Recalling that
$\tilde{f}_{\ell}\rightarrow f$ in $L^{2}(Q)$-strong, we get that $(u,w)$
satisfies the equation
\[
c\frac{\partial u}{\partial t}+\frac{\partial w}{\partial t}-{\Div}%
\boldsymbol{a}(\cdot,\nabla u)=f.
\]
It remains to check that the hysteresis equation in (\ref{eq1}) holds, that
is, $w=\mathcal{W}(u)$. We already remarked that the a priori estimates we
found yield
\begin{equation}
u_{\ell}\rightarrow u\text{ in }{L^{p}(0,T;W_{0}^{1,p}(\Omega))\cap
H^{1}(0,T;L^{2}(\Omega))}\text{-weak.}\label{1.11}%
\end{equation}
On the other hand, by interpolation and after a suitable choice of
representation in equivalence classes, we may deduce, from (\ref{eq9'}) (where
the last inclusion is also compact) that, possibly extracting a subsequence,
we have
\[
u_{\ell}\rightarrow u\text{ uniformly in }[0,T]\text{ and a.e. in }%
\Omega\text{.}%
\]
Using the strong continuity of the operator $\mathcal{W}$, we get that
\[
\mathcal{W}(u_{\ell};\cdot)\rightarrow\mathcal{W}(u;\cdot)\text{ uniformly in
}[0,T]\text{ and a.e. in }\Omega\text{.}%
\]
Now, we define the functions
\[
z_{\ell}(x,t)=\mathcal{W}[u_{\ell}(x,\cdot);x](t)\text{ (}\ell\in
\mathbb{N}\text{) and }z(x,t)=\mathcal{W}[u(x,\cdot);x](t).
\]
The compactness of the imbedding (\ref{eq8'}) yields that $u_{\ell}\rightarrow
u$ in $L^{2}(\Omega;\mathcal{C}\left(  [0,T]\right)  )$-strong; in particular,
$u_{\ell}(x,\cdot)\rightarrow u(x,\cdot)$ in $\mathcal{C}\left(  [0,T]\right)
$, for a.e $x\in\Omega$. The fact that $w=\mathcal{W}(u;\cdot)$ can be showed
arguing as in \cite[Section IV.1]{A.Visintin} in particular we have to use
some interpolation results and exploit the continuity of the hysteresis
operator $\mathcal{W}$ uniformly in time, a.e. in space, which can be deduced
from the locally Lipschitz continuity property of $\mathcal{W}$. Thus, using
the continuity of $\mathcal{W}$ assumed in (\textbf{A4}), we have $z_{\ell
}(x,\cdot)\rightarrow z(x,\cdot)$ in $\mathcal{C}\left(  [0,T]\right)  $, for
a.e. $x\in\Omega$. Next, note that, owing to the definition of the function
$h_{v,r}$ given in \cite{Kopfova},
\[
\sup_{0\leq t\leq T}\left\vert z_{\ell}(x,t)\right\vert \leq\kappa
_{0}(x)+\gamma_{0}\sup_{0\leq t\leq T}\left\vert u_{\ell}(x,t)\right\vert
\text{ for a.e. }x\in\Omega,
\]
where the right-hand side converges in $L^{2}(\Omega)$. Hence $z_{\ell
}\rightarrow z$ in $L^{2}(\Omega;\mathcal{C}\left(  [0,T]\right)  )$-strong.
Since $w_{\ell}$ is the linear interpolate of $z_{\ell}$, an analogous
argument shows that $w_{\ell}-z_{\ell}\rightarrow0$ in $L^{2}(\Omega
;\mathcal{C}\left(  [0,T]\right)  )$-strong.

In summary, as $w_{\ell}(x,\cdot)$ is the time interpolate given by
(\ref{eq27}), we have
\[
w_{\ell}\rightarrow\mathcal{W}(u;\cdot)\text{ uniformly in }[0,T]\text{ and
a.e. in }\Omega\text{.}%
\]
Therefore, by (\ref{eq37}) we get $w=\mathcal{W}(u;\cdot)$ a.e. in $Q$. By
(\ref{eq1918}), the sequence $(\left\Vert w_{\ell}(\cdot,t)\right\Vert
_{\mathcal{C}\left(  [0,T]\right)  })_{\ell}$ is uniformly integrable in
$\Omega$ as the same holds for $u_{\ell}$. Hence we have shown that $w_{\ell
}\rightarrow w=z$ in $L^{2}(\Omega;\mathcal{C}\left(  [0,T]\right)  )$-strong.
Finally we get that
\[
w\in L^{2}(Q)\cap L^{2}(\Omega;\mathcal{C}([0,T]))\text{ with }w^{\prime}\in
L^{p^{\prime}}(0,T;W^{-1,p^{\prime}}(\Omega)).
\]
This concludes the proof of the existence issue.

\section{Proof of Theorem \ref{t4.1}: Uniqueness result\label{subsec3.2}}

The main purpose of this section is to prove uniqueness of the solution to
(\ref{eq1}) together with the estimate (\ref{1.9}). It is important to remark
that no information concerning the uniqueness of the solution is presented in
the preceding subsection. This question has indeed remained unanswered for a
number of years, and it is Hilpert \cite{Hilpert} who finally developed a
technique to shown that in quite general situation, the solution of the
initial-boundary value problem (\ref{eq1}) does in fact continuously depends
on the right-hand side $f$ and on the initial data. His method will be
presented in the sequel.

The next results can be found in \cite{Hilpert} or in \cite{Brokate} in which
some slightly modified results have been stated and proved, but we recall them
here for the convenience of the reader. The following  inequality will play a
key role.

\begin{proposition}
[Hilpert's Inequality]\label{p3.1}Consider the hysteresis operator
$\mathcal{W}$ given by
\begin{equation}
\mathcal{W}\left[  v;w_{-1}\right]  (t)=q\left(  \mathcal{F}\left[
v,w_{-1}\right]  (t)\right)  ,\quad0\leq t\leq T, \label{eq40}%
\end{equation}
with $w_{-1}\in\mathbb{R}$, where $q\in W_{loc}^{1,\infty}(\mathbb{R})$ is an
increasing function and where $\mathcal{F}$ is a hysteresis operator. Suppose
that $v_{1},v_{2}\in W^{1,1}(0,T)$ and $w_{-1,1},\ w_{-1,2}\in\mathbb{R}$ are
given, and let $v=v_{2}-v_{1}$, $w=w_{2}-w_{1}$, where $w_{i}=\mathcal{W}%
[v_{i},w_{-1,i}]$,$\ i=1,2$. Then
\begin{equation}
\frac{d}{dt}w_{+}(t)\leq w^{\prime}(t)H\left(  v(t)\right)  \text{ a.e. in
}\left(  0,T\right)  , \label{eq41}%
\end{equation}

where $w_{+}=\max{\left\{  w,0\right\}  }$ and where $H$ denotes the Heaviside function.
\end{proposition}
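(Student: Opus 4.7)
The plan is to reduce the claimed inequality to a pointwise property of the operator $\mathcal{F}$ by invoking a chain rule for the positive-part function. Since $v_1, v_2 \in W^{1,1}(0,T)$ and $q$ is locally Lipschitz and increasing, each $w_i = q \circ \mathcal{F}[v_i; w_{-1,i}]$ lies in $W^{1,1}(0,T)$, and so does $w = w_2 - w_1$. By the Stampacchia truncation lemma,
$$
(w_+)'(t) = w'(t)\,\chi_{\{w>0\}}(t)\quad\text{a.e. in }(0,T),
$$
while $w'(t) = 0$ at a.e.\ point of $\{w = 0\}$. Inequality \eqref{eq41} is therefore equivalent to $w'(t)\bigl(\chi_{\{w>0\}}(t) - H(v(t))\bigr) \leq 0$ a.e., and a routine case analysis on the signs of $w(t)$ and $v(t)$ reduces it to a single nontrivial implication: if $w(t) > 0$ and $v(t) \leq 0$, then $w'(t) \leq 0$. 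The symmetric case with $w(t) < 0$ and $v(t) > 0$ follows by swapping the labels $1$ and $2$.

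To prove the reduced claim, set $F_i(t) = \mathcal{F}[v_i; w_{-1,i}](t)$. Since $q$ is increasing, $w(t) > 0$ forces $F_2(t) > F_1(t)$, and the a.e.\ chain rule for $q$ yields
$$
w'(t) = q'(F_2(t))\,F_2'(t) - q'(F_1(t))\,F_1'(t)\quad\text{a.e.}
$$
The key step is then a hysteresis-type order preservation: at a.e.\ $t$ where $F_2(t) > F_1(t)$ and $v_2(t) \leq v_1(t)$ one has $F_2'(t) \leq 0$ and $F_1'(t) \geq 0$. Combined with $q' \geq 0$, this gives $q'(F_2)\,F_2' \leq 0$ and $-q'(F_1)\,F_1' \leq 0$, hence $w'(t) \leq 0$ as required.

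The main obstacle is precisely this pointwise sign structure $F_2'(t) \leq 0 \leq F_1'(t)$, which is the only step that genuinely exploits hysteresis rather than monotonicity of $q$. For play-type operators (and, by a Preisach-type decomposition, for the general class of hysteresis operators considered here), it follows from the variational characterization of $\mathcal{F}$: each $F_i$ is confined to a play region around $v_i$ and can only move when $v_i$ is at its threshold. If $F_2 > F_1$ while $v_2 \leq v_1$, then $v_2$ cannot simultaneously drive $F_2$ upward nor can $v_1$ drive $F_1$ downward, which forces $F_2' \leq 0 \leq F_1'$. I would either verify this directly for the scalar play operator and then transfer it via a Preisach representation, or invoke the corresponding statement from \cite{Hilpert} and \cite{Brokate}. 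Once this is secured, the chain-rule reduction in the first paragraph immediately yields \eqref{eq41}.
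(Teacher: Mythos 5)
Your argument follows essentially the same route as the paper's: both reduce \eqref{eq41}, via the a.e.\ identities $\frac{d}{dt}w_{+}=w'$ on $\{w>0\}$ and $\frac{d}{dt}w_{+}=0=w'$ a.e.\ on $\{w\leq 0\}$, to a single order-preservation implication for the underlying operator (the paper's \eqref{eq42}; your ``$F_{2}>F_{1}$ and $v_{2}\leq v_{1}$ imply $F_{2}'\leq 0\leq F_{1}'$''), which is then transferred to $\mathcal{W}=q\circ\mathcal{F}$ using $q'\geq 0$ and closed by swapping the indices. Like the paper, you do not prove that implication but treat it as a known property of the hysteresis class at hand (deferring to the play/Preisach verification or to \cite{Hilpert} and \cite{Brokate}), so the two proofs are on the same footing; your version is merely slightly more explicit about the Stampacchia step and the case analysis.
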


\begin{proof}
$\mathcal{F}$ maps $W^{1,1}(0,T)$ into itself. Hence, the chain rule can be
applied to (\ref{eq40}), and the time derivatives in (\ref{eq41}) are defined
almost everywhere. If $q(x)=x$, i.e. if $\mathcal{W}[\cdot;w_{-1}%
]=\mathcal{F}[\cdot;w_{-1}]$, that the crucial implication%
\begin{equation}
w_{2}(t)<w_{1}(t),\ v_{2}(t)\geq v_{1}(t)\Longrightarrow w_{2}^{\prime}%
(t)\geq0\text{, }w_{1}^{\prime}(t)\leq0, \label{eq42}%
\end{equation}
holds almost everywhere in $(0,T)$. Since (\ref{eq42}) remains for almost
every $t\in(0,T)$ if $w_{i}(t)$ is replace by $q\left(  w_{i}(t)\right)  $, we
see that (\ref{eq42}) is true for any increasing $q\in W_{loc}^{1,\infty
}(\mathbb{R})$.

Now, (\ref{eq42}) implies that%
\[
0\leq w^{\prime}(t)H(v(t))\text{ if }w_{2}(t)<w_{1}(t).
\]
Interchanging the indices $1$ and $2$, we also get
\[
w^{\prime}(t)\leq w^{\prime}(t)H(v(t))\text{ if }w_{1}(t)<w_{2}(t).
\]
Finally, on the set $\left\{  t:w_{1}(t)=w_{2}(t)\right\}  $ both sides of
(\ref{eq41}) vanish, which concludes the proof of the assertion.
\end{proof}

We now present the general stability result. In addition to (\textbf{A4}), we
need further Assumption (\textbf{A5}) on the hysteresis operators
$\mathcal{W}[\cdot;x]$, $x\in\Omega$.

\begin{remark}
\label{r4.1}\emph{Let (\textbf{A5}) be satisfied. Since any weak solution
}$(u,w)$\emph{ in the sense of Theorem \ref{t4.1} satisfies}%
\[
u\in H^{1}\left(  0,T;L^{2}(\Omega)\right)  \ \left(  =L^{2}\left(
\Omega;H^{1}(0,T)\right)  \subset L^{2}\left(  \Omega;W^{1,1}(0,T)\right)
\right)  ,
\]
\emph{we can conclude that }$w\in L^{2}\left(  \Omega;H^{1}(0,T)\right)
$\emph{.}
\end{remark}

\begin{theorem}
[$L^{1}$-Stability for the nonlinear heat equation with hysteresis]%
\label{t4.2}Let \emph{(\textbf{A4})} hold, and let $u_{0,1}$, $u_{0,2}\in
W_{0}^{1,p}(\Omega)$ and $f_{1},f_{2}\in L^{2}(Q)$ be given. Suppose that the
parameterized hysteresis operator $\mathcal{W}[\cdot;x]$ satisfies
\emph{(\textbf{A5})}, and, at every space point $x\in\Omega$, the inequality
\emph{(\ref{eq41})}. Then any pair $(u_{1},w_{1})$ and $(u_{2},w_{2})$ of weak
solutions to \emph{(\ref{eq1})} in the sense of Theorem \emph{\ref{t4.1}}
satisfies, for almost every $t\in(0,T)$,
\begin{equation}%
\begin{array}
[c]{l}%
{\displaystyle\int_{\Omega}}
{c\left\vert u_{2}-u_{1}\right\vert (x,t)dx}+%
{\displaystyle\int_{\Omega}}
{\left\vert w_{2}-w_{1}\right\vert (x,t)dx}\leq%
{\displaystyle\int_{\Omega}}
{\left\vert u_{0,2}-u_{0,1}\right\vert (x)dx}\\
\ \ \ \ \ \ \ +%
{\displaystyle\int_{\Omega}}
{\left\vert w_{2}-w_{1}\right\vert (x,0)dx}+%
{\displaystyle\int_{0}^{t}}
{%
{\displaystyle\int_{\Omega}}
{\left\vert f_{2}-f_{1}\right\vert (x,\tau)dx}d\tau}.
\end{array}
\label{eq48}%
\end{equation}

\end{theorem}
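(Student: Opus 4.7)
The strategy is the classical $L^{1}$-contraction argument of Hilpert: subtract the two weak formulations, test against a smooth approximation of the Heaviside function composed with $u_{2}-u_{1}$, exploit monotonicity of $\boldsymbol{a}$ to discard the diffusion contribution, and use Proposition \ref{p3.1} pointwise in $x\in\Omega$ to control the hysteresis contribution.

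First I would set $u=u_{2}-u_{1}$, $w=w_{2}-w_{1}$, $f=f_{2}-f_{1}$, $u_{0}=u_{0,2}-u_{0,1}$, so that by linearity the difference satisfies
\[
\int_{Q}cu'\varphi\,dxdt+\int_{0}^{T}\langle w',\varphi\rangle\,dt+\int_{Q}\bigl(\boldsymbol{a}(\cdot,\nabla u_{2})-\boldsymbol{a}(\cdot,\nabla u_{1})\bigr)\cdot\nabla\varphi\,dxdt=\int_{Q}f\varphi\,dxdt
\]
for every $\varphi\in L^{p}(0,T;W_{0}^{1,p}(\Omega))$. By Remark \ref{r4.1} combined with assumption (\textbf{A5}), both $u'$ and $w'$ belong to $L^{2}(Q)$, which will allow me to use $H_{\varepsilon}(u)$ as a genuine test function rather than a mere formal multiplier.

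Next, for $\varepsilon>0$, I introduce a smooth non-decreasing Lipschitz approximation $H_{\varepsilon}\colon\mathbb{R}\to[0,1]$ of the Heaviside function with $H_{\varepsilon}(s)=0$ for $s\leq 0$ and $H_{\varepsilon}(s)=1$ for $s\geq\varepsilon$, together with its primitive $\Psi_{\varepsilon}(s)=\int_{0}^{s}H_{\varepsilon}(r)\,dr$. Since $H_{\varepsilon}$ is Lipschitz with $H_{\varepsilon}(0)=0$ and $u\in L^{p}(0,T;W_{0}^{1,p}(\Omega))$, the composition $H_{\varepsilon}(u)$ lies in that same space, so testing is legitimate. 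Integrating the difference equation over a time interval $[t_{1},t_{2}]\subset(0,T)$ against $H_{\varepsilon}(u)$, three things happen: the parabolic term yields $\int_{\Omega}c\Psi_{\varepsilon}(u(t_{2}))\,dx-\int_{\Omega}c\Psi_{\varepsilon}(u(t_{1}))\,dx$; the diffusion term equals $\int_{t_{1}}^{t_{2}}\int_{\Omega}H_{\varepsilon}'(u)\bigl(\boldsymbol{a}(\cdot,\nabla u_{2})-\boldsymbol{a}(\cdot,\nabla u_{1})\bigr)\cdot\nabla u\,dxdt$, which is non-negative by (\textbf{H})$_{3}$ since $H_{\varepsilon}'\geq 0$, and so may be dropped; and the right-hand side is bounded above by $\int_{t_{1}}^{t_{2}}\int_{\Omega}|f|\,dxdt$ because $0\leq H_{\varepsilon}\leq 1$.

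The delicate step is the hysteresis term $\int_{t_{1}}^{t_{2}}\int_{\Omega}w'H_{\varepsilon}(u)\,dxdt$. Passing $\varepsilon\to 0$ via dominated convergence (using $w'\in L^{2}(Q)$) gives $\int_{t_{1}}^{t_{2}}\int_{\Omega}w'H(u)\,dxdt$, where $H$ denotes any fixed version of the Heaviside function. Here I invoke Proposition \ref{p3.1}: at almost every $x\in\Omega$, $w(x,\cdot)$ and $u(x,\cdot)$ satisfy $\frac{d}{dt}w_{+}(x,t)\leq w'(x,t)H(u(x,t))$ a.e.\ in $(0,T)$. Integrating in $x$ and in $t\in[t_{1},t_{2}]$ yields
\[
\int_{\Omega}w_{+}(x,t_{2})\,dx-\int_{\Omega}w_{+}(x,t_{1})\,dx\leq\int_{t_{1}}^{t_{2}}\int_{\Omega}w'H(u)\,dxdt.
\]
Combining these ingredients and letting first $\varepsilon\to 0$ (where $\Psi_{\varepsilon}(u)\to u_{+}$ monotonically) and then $t_{1}\to 0$ (using continuity of $u$ and $w$ in time with values in $L^{2}(\Omega)$) gives
\[
\int_{\Omega}c\,u_{+}(x,t_{2})\,dx+\int_{\Omega}w_{+}(x,t_{2})\,dx\leq\int_{\Omega}c\,(u_{0})_{+}\,dx+\int_{\Omega}w_{+}(x,0)\,dx+\int_{0}^{t_{2}}\int_{\Omega}|f|\,dxd\tau.
\]
Finally I repeat the argument after swapping $(u_{1},w_{1})\leftrightarrow(u_{2},w_{2})$, which replaces the positive parts by negative parts; adding the two resulting inequalities produces $|u|=u_{+}+u_{-}$ and $|w|=w_{+}+w_{-}$ on the left and $|u_{0,2}-u_{0,1}|$, $|w_{2}-w_{1}|(x,0)$, $|f_{2}-f_{1}|$ on the right, which is precisely \eqref{eq48}.

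The main obstacle I expect is justifying the vanishing of the diffusion term in the limit $\varepsilon\to 0$ together with the rigorous passage to the limit in the hysteresis term: the former is harmless because it is non-negative and discarded, while the latter requires that Hilpert's inequality \eqref{eq41} be applicable pointwise in $x$ for a.e.\ $x\in\Omega$, which rests on the regularity $w(x,\cdot)\in H^{1}(0,T)$ granted by Remark \ref{r4.1} under hypothesis (\textbf{A5}).
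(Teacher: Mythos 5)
Your proposal is correct and follows essentially the same route as the paper: test the difference of the weak formulations with a regularized Heaviside function of $u_{2}-u_{1}$, discard the diffusion term by monotonicity of $\boldsymbol{a}(x,\cdot)$, pass to the limit $\varepsilon\to 0$, control the hysteresis term via Hilpert's inequality from Proposition \ref{p3.1} applied pointwise in $x$, and symmetrize in the indices. The only cosmetic differences are that you make the primitive $\Psi_{\varepsilon}$ explicit for the parabolic term and integrate over $[t_{1},t_{2}]$ before sending $t_{1}\to 0$, whereas the paper tests directly with $(H_{\varepsilon}\circ u)1_{(0,t)}$; these are equivalent.
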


\begin{proof}
We proceed as in \cite{Brokate}. Let $H_{\varepsilon}:\mathbb{R}%
\rightarrow\mathbb{R}$ denote the regularized Heaviside function defined by
\[
H_{\varepsilon}(x)=\left\{
\begin{array}
[c]{l}%
1\text{ if }x\geq\varepsilon\\
\frac{x}{\varepsilon}\text{\ if }0\leq x\leq\varepsilon\\
0\text{ if }x\leq0.
\end{array}
\right.
\]
We set $u=u_{2}-u_{1}$ and $w=w_{2}-w_{1}$, where $w_{i}(x,\cdot
)=\mathcal{W}[u_{i}(x,\cdot);x]$, $i=1,2$. Clearly $H_{\varepsilon}\circ u\in
L^{\infty}(Q)\cap L^{\infty}\left(  0,T;W_{0}^{1,p}(\Omega)\right)  $, since
$H_{\varepsilon}$ is Lipschitz continuous. Hence, we may test the difference
of the variational equations (\ref{eq14}) for the pairs $(u_{i},w_{i})$
($i=1,2$) and the right-hand sides $f_{i}$ by the function $\varphi=\left(
H_{\varepsilon}\circ u\right)  1_{(0,t)}$, for $t\in(0,T)$, to obtain
\begin{equation}
\left\{
\begin{array}
[c]{l}%
{\displaystyle\int_{0}^{t}}
{\displaystyle\int_{\Omega}}
cu^{\prime}H_{\varepsilon}(u)dxd\tau+%
{\displaystyle\int_{0}^{t}}
\left\langle {{w^{\prime},H_{\varepsilon}(u)}}\right\rangle d\tau\\
\ +%
{\displaystyle\int_{0}^{t}}
{\displaystyle\int_{\Omega}}
(\boldsymbol{a}(\cdot,\nabla u_{2})-\boldsymbol{a}(\cdot,\nabla u_{1}%
))\cdot\nabla\left(  H_{\varepsilon}\circ u\right)  dxd\tau=%
{\displaystyle\int_{0}^{t}}
{\displaystyle\int_{\Omega}}
\left(  f_{2}-f_{1}\right)  H_{\varepsilon}(u)dxd\tau.
\end{array}
\right.  \label{eq50}%
\end{equation}
Applying the chain rule to the third integrand of the left-hand side of
(\ref{eq50}), we obtain
\[
(\boldsymbol{a}(\cdot,\nabla u_{2})-\boldsymbol{a}(\cdot,\nabla u_{1}%
))\cdot\nabla\left(  H_{\varepsilon}\circ u\right)  =H_{\varepsilon}^{\prime
}(u)(\boldsymbol{a}(\cdot,\nabla u_{2})-\boldsymbol{a}(\cdot,\nabla
u_{1}))\cdot\nabla(u_{2}-u_{1})\geq0,
\]
the last inequality being a consequence of the monotonicity of $\boldsymbol{a}%
(x,\cdot)$. Hence, in view of (\ref{eq50})
\[
\int_{0}^{t}{\int_{\Omega}{\left(  cu^{\prime}+w^{\prime}\right)
(x,\tau)H_{\varepsilon}\left(  u(x,\tau)\right)  dx}d\tau}\leq\int_{0}%
^{t}{\int_{\Omega}{\left(  f_{2}-f_{1}\right)  (x,\tau)H_{\varepsilon}\left(
u(x,\tau)\right)  dx}d\tau}.
\]
Since $u^{\prime}$, $w^{\prime}\in L^{1}(Q)$ and $\left\vert H_{\varepsilon
}\circ u\right\vert \leq1$, we may pass to the limit as $\varepsilon
\rightarrow0$ to arrive at
\begin{equation}%
\begin{array}
[c]{l}%
{\displaystyle\int_{\Omega}}
{cu_{+}(x,t)dx}+%
{\displaystyle\int_{\Omega}}
{%
{\displaystyle\int_{0}^{t}}
{w^{\prime}(x,\tau)H\left(  u(x,\tau)\right)  d\tau}dx}\leq\\
\ \ \ \ \
{\displaystyle\int_{\Omega}}
{cu_{+}(x,0)dx}+%
{\displaystyle\int_{0}^{t}}
{%
{\displaystyle\int_{\Omega}}
{\left(  f_{2}-f_{1}\right)  (x,\tau)H\left(  u(x,\tau)\right)  dx}d\tau}.
\end{array}
\label{eq53}%
\end{equation}
We estimate the second integral on the left side of (\ref{eq53}) from below
using (\ref{eq41}) to obtain
\begin{equation}%
\begin{array}
[c]{l}%
{\displaystyle\int_{\Omega}}
{cu_{+}(x,t)dx}+%
{\displaystyle\int_{\Omega}}
{w_{+}(x,t)dx}\leq%
{\displaystyle\int_{\Omega}}
{cu_{+}(x,0)dx}\\
\ \ +%
{\displaystyle\int_{\Omega}}
{w_{+}(x,0)dx}+%
{\displaystyle\int_{0}^{t}}
{%
{\displaystyle\int_{\Omega}}
{\left(  f_{2}-f_{1}\right)  (x,\tau)H\left(  u(x,\tau)\right)  dx}d\tau}.
\end{array}
\label{eq54}%
\end{equation}
We now reverse the role of the indices $1$ and $2$ and add (\ref{eq54}) to the
corresponding inequality. Since $0\leq H(v)+H(-v)\leq1$, the resulting
inequality yields (\ref{eq48}).
\end{proof}

\begin{corollary}
[Uniqueness]\label{c.41}Under the assumptions of Theorem \emph{\ref{t4.2}},
the weak solution in the sense of Theorem \emph{\ref{t4.1}} is unique.
\end{corollary}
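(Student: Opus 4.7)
The plan is to deduce uniqueness as an immediate specialization of the $L^1$-stability estimate (\ref{eq48}) provided by Theorem \ref{t4.2}. Given two weak solutions $(u_1, w_1)$ and $(u_2, w_2)$ of (\ref{eq1}) in the sense of Definition \ref{d1.1} corresponding to the same data, I would set $u_{0,1} = u_{0,2} = u_0$ and $f_1 = f_2 = f$ in the hypotheses of Theorem \ref{t4.2}.

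With this choice, the first and third terms on the right-hand side of (\ref{eq48}) vanish identically. For the middle term, I would invoke the initial condition built into the hysteresis coupling: by the construction of $\mathcal{W}$ (cf. the discrete analogue $w_0(x) = \mathcal{W}_f(u_0(x); x)$ in (\ref{eq17})), both $w_1(\cdot, 0)$ and $w_2(\cdot, 0)$ are determined as $\mathcal{W}_f(u_0(\cdot); \cdot)$ and therefore agree pointwise almost everywhere in $\Omega$. Hence $\int_{\Omega} |w_2 - w_1|(x, 0)\, dx = 0$ as well.

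Inserting these facts into (\ref{eq48}) yields, for almost every $t \in (0, T)$,
\[
\int_{\Omega} c\,|u_2 - u_1|(x, t)\, dx + \int_{\Omega} |w_2 - w_1|(x, t)\, dx \leq 0.
\]
Both integrands are non-negative and, by assumption (\textbf{A2}), $c(x) \geq \alpha > 0$ almost everywhere in $\Omega$. Consequently each of the two integrals must vanish, giving $u_1 = u_2$ and $w_1 = w_2$ almost everywhere in $Q$. Since this holds for almost every $t$ and both $u_i$ are continuous from $[0,T]$ into $L^2(\Omega)$ (via the embedding $Y \hookrightarrow \mathcal{C}([0,T]; L^2(\Omega))$ recorded in Section \ref{subsec1.2}), equality in fact holds for every $t \in [0,T]$.

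There is no real obstacle here: the work has already been done in proving Theorem \ref{t4.2}; the only point requiring a brief justification is the identity $w_1(\cdot, 0) = w_2(\cdot, 0)$, which follows from the fact that the hysteresis output at $t = 0$ is a measurable function of the initial input $u_0$ alone and is independent of the solution chosen. Once that is noted, the conclusion is a one-line consequence of (\ref{eq48}) together with (\textbf{A2}).
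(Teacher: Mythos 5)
Your proposal is correct and follows essentially the same route as the paper: the paper's own proof simply sets $f_{1}=f_{2}$ and $u_{0,1}=u_{0,2}$ in the stability estimate (\ref{eq48}) and reads off $u_{1}=u_{2}$. Your additional observation that $w_{1}(\cdot,0)=w_{2}(\cdot,0)$ because the hysteresis output at $t=0$ is determined by $u_{0}$ alone is a detail the paper leaves implicit, and your use of $c\geq\alpha>0$ to conclude is exactly what is intended.
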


\begin{proof}
Assuming $f_{1}=f_{2}$ and $u_{0,1}=u_{0,2}$ in (\ref{eq48}) yield at once
$u_{1}=u_{2}$. This ends the proof of Theorem \ref{t4.1}.
\end{proof}

\begin{proof}
[Proof of estimate \emph{(\ref{1.9})}]We first note that thanks to Remark
\ref{r3.1}, the equality (\ref{1.1}) (in Lemma \ref{l1.1}) holds true. Indeed,
we may rewrite the leading equation in (\ref{eq1}) under the form
\begin{equation}
{\Div}\boldsymbol{a}(\cdot,\nabla u)=-f+cu^{\prime}+w^{\prime}.\label{1.12}%
\end{equation}
Using assumption (\textbf{A5}) we obtain that $w^{\prime}\in L^{2}%
(0,T;L^{2}(\Omega))$. According to (\textbf{A2}), $c\in L^{\infty}(\Omega)$
and since it is also known that $u^{\prime}\in L^{2}(0,T;L^{2}(\Omega))$ (see
e.g., (\ref{1.11})) then $cu^{\prime}\in L^{2}(0,T;L^{2}(\Omega))$. So that,
because of assumption (\textbf{A3}) on $f$, the equality (\ref{1.12}) yields
${\Div}\boldsymbol{a}(\cdot,\nabla u)\in L^{2}(0,T;L^{2}(\Omega))$. The
assumptions given in Remark \ref{r3.1} are thus satisfied, in such a way that
(\ref{1.1}) holds, that is,
\[
\frac{d}{dt}\sigma(u(t))=-\left(  {\Div}\boldsymbol{a}(\cdot,\nabla
u(t)),u^{\prime}(t)\right)  \text{ a.e. }t\in\lbrack0,T].
\]
Therefore we multiply (\ref{eq1}) by $u^{\prime}(t)$ and integrate over
$\Omega\times\lbrack t_{1},t_{2}]$ where $0\leq t_{1}<t_{2}\leq T$, and
proceed as we did in obtaining (\ref{eq1.7}). This yields at once (\ref{1.9}).
\end{proof}

\section{Long time behaviour: Proof of Theorem \ref{t1.2}}

We are concerned here with the proof of Theorem \ref{t1.2}.

\begin{proof}
[Proof of Theorem \emph{\ref{t1.2}}]Let $(t_{n})_{n}$ be a sequence of times
satisfying $0\leq t_{n}\rightarrow\infty$ as $n\rightarrow\infty$. Let $u$ be
determined by Theorem \ref{t4.1} (we do not need uniqueness at this level).
Then owing to (\ref{eq1.7}), we have that $u\in\mathcal{C}([0,\infty
);L^{2}(\Omega))$, so that $u(\cdot,t_{n})\equiv u(t_{n})$ makes sense for all
$n$, and we have $\sup_{n}\left\Vert \nabla u(t_{n})\right\Vert _{L^{p}%
(\Omega)}\leq C$ where $C>0$ is independent of $n$. Therefore, up to a
subsequence of $(t_{n})_{n}$ not relabeled, there exists a function
$u_{\infty}\in W_{0}^{1,p}(\Omega)$ such that $u(t_{n})\rightarrow u_{\infty}$
in $W_{0}^{1,p}(\Omega)$-weak and in $L^{p}(\Omega)$-strong. This shows
(\ref{1.4}).

The next step is to check that $u_{\infty}$ solves (\ref{1.5}). To proceed
with, let us first observe that assuming $f$ depending on the time variable
$t$, the hypothesis (\textbf{A3})$_{1}$ yields that $f\in\mathcal{C}%
([0,\infty);L^{2}(\Omega))\cap L^{2}(0,\infty;L^{2}(\Omega))$, so that
$f(t_{n})\rightarrow0$ in $L^{2}(\Omega)$-strong as $t\rightarrow+\infty$. Of
course, if $f$ does not depend on $t$, then we do not need any further
requirement on $f$ (like (\textbf{A3})$_{1}$), but only (\textbf{A3}). This
being so, let $u_{n}(t)=u(t+t_{n})$ for $t\in\lbrack0,1]$, and $w_{n}%
=\mathcal{W}[u_{n};\cdot]$. Then it is a fact that $u_{n}\in L^{\infty
}(0,1;W_{0}^{1,p}(\Omega))\cap\mathcal{C}([0,1];L^{2}(\Omega))$ solves the
equation
\begin{equation}
\left\{
\begin{array}
[c]{l}%
\frac{\partial}{\partial t}\left(  cu_{n}+w_{n}\right)  -{\Div}\boldsymbol{a}%
(\cdot,\nabla u_{n})=f_{n}\text{ in }Q_{1}=\Omega\times(0,1)\\
w_{n}(x)=\mathcal{W}[u_{n}(x,\cdot);x]\text{ in }\Omega\\
u_{n}=0\text{ on }\partial\Omega\times(0,1)\text{ and }u_{n}(0)=u(t_{n})\text{
in }\Omega
\end{array}
\right.  \label{1.6}%
\end{equation}
where $f_{n}(t)=f(t+t_{n})$ for $t\in\lbrack0,1]$. Proceeding as in Subsection
\ref{subsec3.1.2} we obtain the estimate
\begin{equation}
\left\Vert u_{n}^{\prime}\right\Vert _{L^{2}(Q_{1})}+\sup_{0\leq t\leq
1}\left\Vert \nabla u_{n}\right\Vert _{L^{p}(\Omega)}\leq C \label{1.7}%
\end{equation}
where $C>0$ does not depend neither on $t$, nor on $n$. Next, following the
lines of Subsection \ref{subsec3.1.3}, we infer the \ existence of $v\in
L^{\infty}(0,1;W_{0}^{1,p}(\Omega))\cap\mathcal{C}([0,1];L^{2}(\Omega))$ with
$v^{\prime}\in L^{2}(0,1;L^{2}(\Omega))$ such that, up to a subsequence of
$(u_{n})_{n}$ keeping the same notation,
\[
u_{n}\rightarrow v\text{ in }L^{p}(0,1;W_{0}^{1,p}(\Omega))\text{-weak and in
}L^{\infty}(0,1;W_{0}^{1,p}(\Omega))\text{-weak}\ast
\]%
\[
u_{n}^{\prime}\rightarrow v^{\prime}\text{ in }L^{2}(Q_{1})\text{-weak}%
\]
and
\[
\boldsymbol{a}(\cdot,\nabla u_{n})\rightarrow\boldsymbol{a}(\cdot,\nabla
v)\text{ in }L^{p^{\prime}}(Q_{1})\text{-weak.}%
\]
Moreover using (\ref{1.7}) (or (\ref{eq1.7})) we see that
\[
u_{n}^{\prime}=u^{\prime}(\cdot+t_{n})\rightarrow0\text{ in }L^{2}%
(0,1;L^{2}(\Omega))\text{-strong.}%
\]
It follows that $v^{\prime}=0$, in such a way that $v$ is constant with
respect to $t$, that is, $v(t)=v(0)$ in $L^{2}(\Omega)$ for all $t\in
\lbrack0,1]$. However it emerges from the equality $u_{n}(0)=u(t_{n})$ (which
yields $v(0)=u_{\infty}$) that $v(t)=u_{\infty}$ for all $t\in\lbrack0,1]$.
Therefore we obtain $w_{n}\rightarrow\mathcal{W}[u_{\infty};\cdot]$ in
$L^{2}(0,1;L^{2}(\Omega))$-weak, and $\mathcal{W}[u_{\infty};\cdot]$ does not
depend on $t$. It readily follows that $v\equiv u_{\infty}\in W_{0}%
^{1,p}(\Omega)$ solves the equation (\ref{eq14}), which amounts to (\ref{1.5})
by suitable choice of test functions (namely choose $\varphi$ under the form
$\varphi(x,t)=\chi(t)\phi(x)$ with $\chi\in C_{0}^{\infty}(0,1)$ and $\phi\in
W_{0}^{1,p}(\Omega)$). This concludes the proof of the theorem.
\end{proof}

\textbf{Conflict of interests.}\\
The authors declare that there is no conflict of interest regarding the publication
of this paper.

\end{document}